\newcommand{\res}{\mathord{\upharpoonright}}
\newcommand{\Fraisse}{Fra\"iss\'e\xspace}
\newtheorem{theorem}{Theorem}
\newtheorem{lemma}[theorem]{Lemma}
\newtheorem{corollary}[theorem]{Corollary}
\newtheorem{proposition}[theorem]{Proposition}
\newtheorem*{claim}{Claim}
\theoremstyle{definition}
\newtheorem{definition}[theorem]{Definition}
\newtheorem{remark}[theorem]{Remark}
\newtheorem{example}[theorem]{Example}
\begin{document}
\title{Higher dimensional obstructions for star reductions}

\author{Alex Kruckman}
\address{Department of Mathematics and Computer Science\\Wesleyan University\\Science Tower 655\\265 Church Street\\Middletown, CT  06459-0128\\USA}
\email{akruckman@wesleyan.edu}
\urladdr{https://akruckman.faculty.wesleyan.edu/}

\author{Aristotelis Panagiotopoulos}
\address{Mathematics Department, Caltech, 1200 E. California Blvd,
Pasadena, CA 91125}
\email{panagio@caltech.edu}
\urladdr{http://www.its.caltech.edu/~panagio/}

\subjclass[2000]{Primary 03E15; Secondary  54H05}

\thanks{This work greatly benefited from a visit of Alex Kruckman at the California
Institute of Technology in the Spring 2018. 
The authors gratefully acknowledge the hospitality and the financial support of the Institute.}

\keywords{Polish group, Polish
space, Borel reduction, Baire measurable, $*$-reduction, category preserving, isomorphism, n-amalgamation}

\begin{abstract}
A $*$-reduction between two equivalence relations is a Baire measurable reduction which preserves generic notions, i.e., preimages of meager sets are meager. We show that a $*$-reduction between orbit equivalence relations induces generically an embedding between the associated Becker graphs. We introduce a notion of dimension for Polish $G$-spaces which is generically preserved under $*$-reductions. For every natural number $n$ we define a free action of  $S_{\infty}$ whose dimension is $n$ on every invariant Baire measurable non-meager set. We also show that the $S_{\infty}$-space which induces the equivalence relation  $=^{+}$  of countable sets of reals is
$\infty$-dimensional on every invariant Baire measurable non-meager set. We conclude that the orbit equivalence relations associated to all these actions are pairwise incomparable with respect to $*$-reductions.
\end{abstract}

\maketitle

\section{Introduction}

Many classification problems in mathematics can be formally presented as pairs $(X,E)$, where $X$ is a
Polish space and $E$ is an analytic equivalence relation on $X$. For example, the problem of classifying graph structures on domain $\omega$ up to isomorphism, or the problem of classifying self-adjoint operators on a separable Hilbert space up to unitary equivalence, are both instances of this formal setup. In order to compare the relative complexity of two such problems $(X,E),(Y,F),$  one often wants to know whether there exists a map $f\colon X\to Y$ such that: 
\begin{enumerate}
\item $f$ is a {\bf reduction} from $E$ to $F$, i.e., $x E x'\iff f(x)F f(x')$, for all $x,x'\in X$; 
\item $f$ preserves some structural properties of $(X,E)$ and $(Y,F)$.
\end{enumerate}
In practice, and since otherwise the question trivializes,\footnote{Assuming the axiom of choice, there is an ``abstract'' reduction from $E$ to $F$ if and only if $|X/E|\leq |Y/F|$.}  the reduction $f$ is usually assumed to be Borel, or at least Baire measurable. Besides this minimal definability requirement, one often wants $f$ to be sensitive to various other structural properties of $(X,E)$ and $(Y,F)$. For example, in the context of the topological Vaught conjecture it is useful to consider \emph{faithful reductions}, i.e., Borel reductions $f$ from $E$ to $F$ with the additional property that the $F$-saturation of the image of any $E$-invariant Borel subset of $X$ is a Borel subset of $Y$ \cite{Friedman, Hjorth, Gao2, Gao3}. In the context of ergodic theory, when $X,Y$ additionally support probability measures $\mu,\nu$, and $E,F$ are orbit equivalence relations of measure preserving actions of countable groups, one often works with \emph{orbit equivalences}. An orbit equivalence is a Borel bijection $f\colon X_0\to Y_0$ between Borel conull subsets  $X_0\subseteq X$ and $Y_0\subseteq  Y$, which is a reduction from $E\res X^{2}_0$ to $F\res Y^{2}_0$ and which  pulls back $\nu$-null sets to $\mu$-null sets \cite{Gab,GabF,KechrisMiller}. 
In this paper we introduce and study \emph{$*$-reductions}. These are  
Baire measurable reductions which are category preserving.\footnote{For more information on category preserving maps between Polish spaces one may consult  \cite[Appendix A]{Category}.}

\begin{definition}
Let $X$ and $Y$ be Polish spaces. A map $f\colon X\to Y$ is {\bf category preserving} if for every meager subset $M$ of $Y$,  $f^{-1}(M)$ is a meager subset of $X$. A {\bf $*$-map} is a Baire measurable category preserving map. 
\end{definition}

Recall that a map $f\colon X\to Y$ is Baire measurable if the preimage of every open set is Baire measurable. If $f$ is a $*$-map, it satisfies the stronger property that the preimage of every Baire measurable set is Baire measurable. Indeed, a Baire measurable set is the symmetric difference of an open set and a meager set, so its preimage is the symmetric difference of a Baire measurable set and a meager set, which is again Baire measurable. As a consequence, the composition of two $*$-maps is a $*$-map. 

\begin{definition}
Let $E$ and $F$ be equivalence relations on Polish spaces $X$ and $Y$, respectively.
A {\bf $*$-reduction} from $E$ to $F$ is a $*$-map $f\colon X\to Y$ which is a reduction from $E$ to $F$. We write $E\leq_* F$ when such a $*$-reduction exists.
\end{definition}

Since the composition of two $*$-reductions is a $*$-reduction, the relation $\leq_*$  induces a preordering among classification problems $(X,E)$. This  preordering reflects the relative complexity between two such problems from the point of view of $*$-reductions. Showing that some classification problem $*$-reduces to another often just amounts to finding a ``canonical way'' of coding $E$ into $F$. However, showing negative results predicates upon developing a basic obstruction theory for $*$-reductions.
When it comes to simple Borel and Baire measurable reductions, there are many well known descriptive set theoretic and dynamical obstructions \cite{Hjorth, Gao, GOP, Friedman}.  Similar obstructions have been developed for orbit equivalences. For example, Gaboriau's theory of cost  \cite{Gab,GabF} implies that two free groups of different rank can never produce orbit equivalent equivalence relations via free and measure preserving actions on a standard measure space. The main goal of this paper is to develop certain obstructions for $*$-reductions by advancing further some of the techniques introduced in \cite{GOP}.

We briefly describe here the main ideas; definitions and details can be found in Section \ref{S: Becker graph embeddings}. Given a Polish $G$-space $X$, let $E^G_X$ be the associated orbit equivalence relation on $X$. In \cite{GOP}, the authors introduce a digraph structure $\mathcal{B}(X/G)$ on the quotient $X/E^G_X$ based on Becker's notion of right-embeddings.\footnote{In \cite{Becker}, Becker studies left-embeddings. Here and in \cite{GOP}, with an eye on the applications, it is convenient to develop everything in terms of right-embeddings. Analogous results hold for left-embeddings.} 
The \emph{right-Becker graph} $\mathcal{B}(X/G)$ has the elements $[x]$ of $X/E^G_X$ as vertices, and an arrow $[x]\to[y]$ whenever there is a right-Cauchy sequence $(g_n)$ in $G$ so that $g_n y$ converges to $x$. The main structural result in \cite{GOP} (\cite[Proposition 2.8]{GOP}) states that a Baire measurable reduction $f$ from $E^G_X$ to $E^H_Y$ induces an injective digraph homomorphism from $\mathcal{B}(X_0/G)$ to $\mathcal{B}(Y/H)$, where $X_0$ is an invariant generic subset of $X$.  Using this result and the fact that the Becker graph associated to the action of a CLI group $H$ is trivial,\footnote{A group is CLI if it admits a complete and left-invariant metric.} they introduce---in analogy to Hjorth's turbulence condition---a new dynamical obstruction for classifying $E^G_X$ by CLI-group actions. Here we show that for $*$-reductions, the main structural result from \cite{GOP} can be strengthened.

\begin{theorem}\label{T: main}
Suppose that $G,H$ are Polish groups, $X$ is a Polish $G$-space, $Y$ is a Polish $H$-space, and $f\colon X\to Y$ is $*$-reduction from $E^G_X$ to $E^H_Y$. Then there is a $G$-invariant dense $G_{\delta}$ subset $X_0$ of $X$ and an $H$-invariant non-meager Baire measurable subset $Y_0$ of $Y$ so that the induced map $[f]$  restricted to $X_0/G$ is an isomorphism from the digraph $\mathcal{B}(X_0/G)$ to the digraph $\mathcal{B}(Y_0/H)$.
\end{theorem}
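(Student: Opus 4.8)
The plan is to deduce this from the structural result of \cite{GOP} recalled above, applied \emph{twice}: once to $f$, to get a digraph homomorphism in one direction on an invariant generic piece of $X$, and once to a Baire measurable ``inverse'' of $f$, to get the homomorphism in the other direction. Category preservation enters exactly where Baire measurability is not enough: it forces the target set $Y_0$ to be non-meager, so that ``generic in $Y_0$'' is a usable notion.

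I would first build the correspondence. Apply \cite[Proposition 2.8]{GOP} to $f$ to obtain a $G$-invariant dense $G_\delta$ set $X'$ on which $[f]$ is an injective digraph homomorphism $\mathcal{B}(X'/G)\to\mathcal{B}(Y/H)$; also note that $[f]$ is injective on all of $X/G$, directly from $f$ being a reduction. By Baire measurability, fix a dense $G_\delta$ set $C\subseteq X'$ on which $f\res C$ is continuous, and let $X_0\subseteq[C]_G$ be a $G$-invariant dense $G_\delta$ set, where $[C]_G$ denotes the $G$-saturation of $C$ (it is $G$-invariant, comeager, and analytic as the projection of a $G_\delta$; a routine refinement extracts the invariant dense $G_\delta$, and $[C]_G\subseteq X'$ since $C\subseteq X'$). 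Set $Y_0:=[f(X_0)]_H$. Since $X_0\subseteq[C]_G$ is $G$-invariant, each point of $X_0$ lies in the $G$-orbit of a point of $X_0\cap C$, so $Y_0=[f(X_0\cap C)]_H$; as $X_0\cap C$ is Borel and $f\res C$ is continuous, $Y_0$ is analytic, hence $H$-invariant with the Baire property, and since $f^{-1}(Y_0)\supseteq X_0$ is comeager while $f$ is category preserving, $Y_0$ is non-meager. Because $X_0,Y_0$ are invariant, $\mathcal{B}(X_0/G)$ and $\mathcal{B}(Y_0/H)$ are the induced subdigraphs of $\mathcal{B}(X/G)$ and $\mathcal{B}(Y/H)$; from the reduction property, $[f]$ restricts to a bijection $X_0/G\to Y_0/H$; and from $X_0\subseteq X'$, it is a digraph homomorphism $\mathcal{B}(X_0/G)\to\mathcal{B}(Y_0/H)$.

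It remains to show that $[f]$ also reflects arrows, i.e. that $[f]^{-1}$ is a digraph homomorphism; this is the crux. The idea is to run the \cite{GOP} argument backwards. The set $\{(y,c)\in Y_0\times C:\ f(c)\mathrel{E^H_Y}y\}$ is analytic (as $f\res C$ is continuous and $E^H_Y$ is analytic) and has nonempty sections over $Y_0$ by construction, so by Jankov--von Neumann uniformization there is a Baire measurable $g\colon Y_0\to X_0$ with $f(g(y))\mathrel{E^H_Y}y$ for all $y\in Y_0$. Then $[g]=[f]^{-1}$ on vertices, and since $[f]$ is a bijection of quotients, $g$ is a reduction from $E^H_{Y_0}$ to $E^G_{X_0}$. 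Applying \cite[Proposition 2.8]{GOP} to $g$---after localizing to the largest open $H$-invariant set in which $Y_0$ is comeager, a Polish $H$-space in which $Y_0$ is generic---produces an $H$-invariant generic $Y_1\subseteq Y_0$ on which $[f]^{-1}=[g]$ is a digraph homomorphism. Intersecting the invariant generic sets obtained for $f$ and for $g$, shrinking $X_0$ and $Y_0$ accordingly (once more refining to a dense $G_\delta$ and replacing $Y_0$ by the $H$-saturation of $f$ of the new $X_0$), we get that $[f]$ is simultaneously arrow-preserving and arrow-reflecting, hence the required isomorphism of digraphs.

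The main obstacle is precisely this reverse direction. There is no Baire measurable inverse of $f$ available, and the uniformizing map $g$ is \emph{not} category preserving, so one cannot simply iterate the construction. The point is that \cite[Proposition 2.8]{GOP} asks only for a Baire measurable reduction---which $g$ is---while category preservation of $f$ is what is needed beforehand, to guarantee that $Y_0$ is non-meager (and that $f^{-1}$ of a Baire set is Baire), so that genericity inside $Y_0$ makes sense and the two applications of the structural result can be glued. The remaining technical nuisances---that $Y_0$ is analytic rather than Polish, and that all the shrinkings can be organized so that $X_0$ stays a $G$-invariant dense $G_\delta$ with $Y_0=[f(X_0)]_H$---are dealt with by the localization and refinement steps indicated above.
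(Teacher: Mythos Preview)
Your approach is correct and is essentially the same strategy as the paper's: both apply the \cite{GOP} structural result to $f$ for the forward direction, manufacture a Baire measurable inverse via Jankov--von~Neumann, and run the structural result again in the reverse direction, with category preservation of $f$ used exactly to ensure the target set is non-meager so the two applications can be glued. The paper organizes the reverse step a little differently: rather than localizing to an open $H$-invariant $U\subseteq Y$ in which $Y_0$ is comeager and citing \cite[Proposition~2.8]{GOP} on $U$, it proves a mild strengthening of the orbit-continuity lemma (Lemma~\ref{L: orbit continuity}) valid on any $G_\delta$ set $D\subseteq Y$ with $\{h\in H:hy\in D\}$ comeager for each $y\in D$---no comeagerness of $D$ in $Y$ required---and then constructs such a $D$ directly (the Claim in the proof) using the Vaught transform. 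This sidesteps the localization altogether and keeps the final $X_0$ as a single explicit intersection $X_0'\cap X_0''$. One point you should make explicit in your version: after localizing to $U$, the selection $g$ is still only defined on the comeager analytic set $Y_0\cap U$, not on all of $U$, so \cite[Proposition~2.8]{GOP} does not literally apply; you should first pass to an $H$-invariant dense $G_\delta$ subset $V\subseteq Y_0\cap U$ of $U$ (e.g.\ the Vaught $\forall^*$-transform of any dense $G_\delta$ inside $Y_0\cap U$) and then apply the proposition to $g\res V$ on the Polish $H$-space $V$. You gesture at this in your final paragraph, but it is the one step that needs to be written out.
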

This strengthening allows us to utilize ``higher dimensional'' properties of the Becker graphs $\mathcal{B}(X/G)$ and $\mathcal{B}(Y/H)$ 
as obstructions for $*$-reducing $E^G_X$ to $E^H_Y$. We say that the \emph{dimension} of the Becker graph $\mathcal{B}(X/G)$ is at least $n$, if the combinatorial $n$-cube embeds in $\mathcal{B}(X/G)$. We say that the \emph{generic dimension} of $\mathcal{B}(X/G)$ is at least $n$ if the combinatorial $n$-cube embeds in $\mathcal{B}(X_0/G)$,  for every invariant comeager subset $X_0$ of $X$. Similarly, we say the \emph{locally generic dimension of $\mathcal{B}(X/G)$} is at least $n$ if the combinatorial $n$-cube embeds in $\mathcal{B}(X_0/G)$, for every invariant non-meager subset $X_0$ of $X$ with the Baire property.

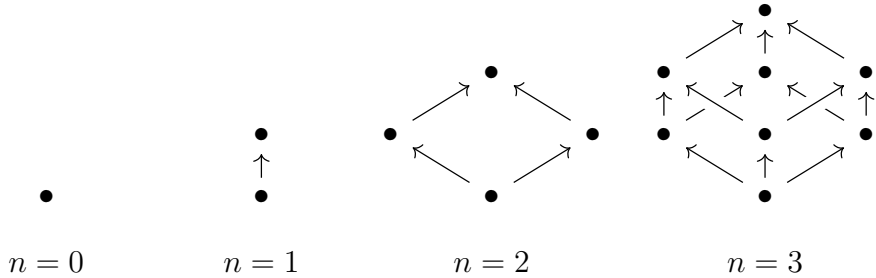
\begin{figure}[!ht]
\centering
\begin{tikzcd}[column sep=0.9em,row sep=0.8em]
& & & & & & &  & & & \bullet & \\
& & & & & & & \bullet &  & \bullet \arrow[ur]  & \bullet \arrow[u] \arrow[from=dr]& \bullet \arrow[ul] \\ 
& & & & \bullet & & \bullet\arrow[ur] & & \bullet \arrow[ul] & \bullet\arrow[u]\arrow[ur] & \bullet  \arrow[ul, crossing over] \arrow[ur, crossing over]& \bullet \arrow[u]\\
\bullet & & & & \bullet\arrow[u] & & & \bullet \arrow[ul]\arrow[ur] & & & \bullet \arrow[ul]\arrow[u]\arrow[ur] & \\
n=0 & & & & n=1 & & & n=2 & & & n=3 & \\
\end{tikzcd}
\caption{The combinatorial $n$-cube.}\label{Picture1}
\end{figure}

From this point of view, the anti-classification criterion in \cite{GOP} corresponds to the fact that CLI groups induce $0$-dimensional orbit equivalence relations and hence they cannot reduce --- in a Baire measurable fashion --- to orbit equivalence relations which are generically at least $1$-dimensional. In Section \ref{S: Applications}, we define for every $n>0$ a free action  of $S_{\infty}$ on a Polish space  of countable structures $\mathrm{Mod}_\omega(\widehat{\mathcal{B}_{n}})$  which, as we show in Theorem \ref{thm:bklmain}, is locally generically $(n-1)$-dimensional but not  $n$-dimensional. Theorem~\ref{T: main} then implies that the associated classification problems $(\mathrm{Mod}_\omega(\widehat{\mathcal{B}_{n}}),\simeq_{\mathrm{iso}})$, for $n>0$, are incomparable under $*$-reductions. The structures in  $\mathrm{Mod}_\omega(\widehat{\mathcal{B}_{n}})$ are \emph{labeled} versions of certain families of structures that were introduced and studied in~\cite{BKL} for their interesting behavior with respect to disjoint $n$-amalgamation.

In the process of proving Theorem \ref{thm:bklmain} we develop a general method which can be used for computing the generic dimension of other similar problems; see Remark \ref{Remark fin}. In particular, our method implies that $=^{+}$ is the orbit equivalence relation of an $\infty$-dimensional $S_{\infty}$-space and is therefore incomparable under $*$-reductions to
$(\mathrm{Mod}_\omega(\widehat{\mathcal{B}_{n}}),\simeq_{\mathrm{iso}})$ for all $n>0$.

\subsubsection*{Acknowledgments}
We are grateful to Alexander Kechris, Martino Lupini, and Ronnie Chen for many helpful discussions and suggestions.

\section{$*$-reductions and Becker graphs}\label{S: Becker graph embeddings}

In what follows, $X$ and $Y$ will be Polish spaces and $E$ and $F$ will be analytic equivalence relations on $X$ and $Y$, respectively. Often these equivalence relations will be  \emph{orbit equivalence relations} associated  with a Polish group action.  
Let $G$ be a Polish group. A {\bf Polish $G$-space} is a Polish space $X$ together with a continuous action $G\times X\to X$ on $X$. The associated {\bf orbit equivalence relation} $E^G_X$ on $X$ is defined by $x E^G_X y \iff [x]=[y]$, where $[x]$ denotes the orbit of $x\in X$ under the action. If $A$ is any subset of $X$, we will denote by $[A]$ the $G$-saturation of $A$: $\{x\mid \exists g\in G\, g x\in A\}$. Note that if $A$ is an analytic subset of $X$, then $[A]$ is analytic. 

Some differences are immediately apparent between the usual definable reductions and $*$-reductions. For example, since the preimage of a comeager set under a category preserving map is comeager, an equivalence relation $E$ cannot $*$-reduce to an equivalence relation $F$ with a comeager $F$-class, unless $E$ also has a comeager $E$-class.  As a consequence, and in contrast to the Borel reduction hierarchy \cite{Friedman}, countable graph isomorphism is not $\leq_*$-universal amongst orbit equivalence relations of $S_{\infty}$ actions. 
On the other hand, if $E$ and $F$ each have a comeager equivalence class, then $E\leq_* F$ if and only if $|X/E|\leq |Y/F|$ and there is some $*$-map from the comeager $E$-class to the comeager $F$-class.  The problem becomes more interesting when both $E$ and $F$ have only meager equivalence classes, and we will typically be interested in this case.

 In general, $*$-reductions reveal to be much more sensitive to the dynamical aspects of the classification problems under comparison compared to the usual definable reductions. For example, we have the following proposition. Recall that an equivalence relation $E$ on $X$ is {\bf generically ergodic} if every Baire measurable $E$-invariant subset of $X$ is either meager or comeager.

\begin{proposition}\label{P: gen erg}
Let $E$ and $F$ be analytic equivalence relations on Polish spaces $X$ and $Y$ respectively. If $E\leq_* F$ and $F$ is generically ergodic, then $E$ is generically ergodic. 
\end{proposition}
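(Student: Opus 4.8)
The plan is to prove the contrapositive at the level of invariant sets: assume $E \leq_* F$ via a $*$-reduction $f$, assume $E$ is \emph{not} generically ergodic, and produce a Baire measurable $F$-invariant set in $Y$ that is neither meager nor comeager, thereby witnessing that $F$ is not generically ergodic. So let $A \subseteq X$ be a Baire measurable $E$-invariant set which is neither meager nor comeager in $X$. The natural candidate in $Y$ is the $F$-saturation $B := [f(A)]_F$ of the image of $A$. Since $A$ is Baire measurable, $f(A)$ is (an analytic-modulo-meager) image, and its $F$-saturation is analytic, hence Baire measurable; $B$ is $F$-invariant by construction.

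The two things to check are that $B$ is non-meager and that $B$ is non-comeager. For non-meagerness: since $f$ is category preserving, $f^{-1}$ of a meager set is meager, so $f^{-1}(B)$ is non-meager would follow if $B$ were non-meager — but I want the converse direction, so I should argue directly. Here I use that $f$ is a \emph{reduction}: because $A$ is $E$-invariant, $f^{-1}([f(A)]_F) = f^{-1}(f(A)) \supseteq A$ up to the reduction property — more precisely, if $f(x) \mathrel{F} f(a)$ for some $a \in A$, then $x \mathrel{E} a$, hence $x \in A$ since $A$ is $E$-invariant; conversely $A \subseteq f^{-1}(B)$ trivially. So $f^{-1}(B) = A$ exactly (on the nose, using $E$-invariance of $A$ in both inclusions). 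Now if $B$ were meager, then $f^{-1}(B) = A$ would be meager by category preservation, contradicting the choice of $A$. Symmetrically, $Y \setminus B$ is $F$-invariant and Baire measurable, and $f^{-1}(Y \setminus B) = X \setminus f^{-1}(B) = X \setminus A$, which is non-meager since $A$ is non-comeager; so if $B$ were comeager, then $Y \setminus B$ would be meager and hence $f^{-1}(Y \setminus B) = X \setminus A$ would be meager, again a contradiction. Thus $B$ is a Baire measurable $F$-invariant set that is neither meager nor comeager, so $F$ is not generically ergodic.

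The only genuinely delicate point is the identity $f^{-1}([f(A)]_F) = A$. The inclusion $\supseteq$ is immediate. For $\subseteq$: if $x \in f^{-1}([f(A)]_F)$ then $f(x) \mathrel{F} f(a)$ for some $a \in A$; since $f$ is a reduction, $x \mathrel{E} a$; since $A$ is $E$-invariant, $x \in A$. This uses nothing about topology, only that $f$ is a genuine reduction and $A$ is $E$-invariant, so it is clean. I expect the main obstacle to be purely bookkeeping: making sure $B = [f(A)]_F$ is Baire measurable. Since $f$ is merely Baire measurable rather than Borel, $f(A)$ need not be Borel, but $f$ agrees with a Borel function off a meager set, and analytic sets have the Baire property, so $[f(A)]_F$ is analytic hence Baire measurable — and one should note that replacing $f$ by a Borel function on a comeager set does not affect any of the meagerness computations above, since category preservation lets us discard meager sets freely. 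Modulo that standard reduction-to-Borel-on-a-comeager-set maneuver, the argument is the short chain of equalities and inclusions above.
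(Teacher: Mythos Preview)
Your argument is correct and essentially the same as the paper's, just framed contrapositively. The paper proceeds directly: given a Baire measurable $E$-invariant $A$, it passes to a $G_\delta$ subset $A_0\subseteq A$ on which $f$ is continuous (so that $[f(A_0)]_F$ is genuinely analytic), and then uses generic ergodicity of $F$ together with category preservation and the reduction property to conclude $A$ is meager or comeager --- the same three ingredients you use, and your key identity $f^{-1}([f(A)]_F)=A$ is precisely the step the paper leaves as ``it follows.'' Your handling of the Baire-measurability issue (restricting to a $G_\delta$ where $f$ is continuous before taking the $F$-saturation) is exactly what the paper does up front rather than as a cleanup at the end.
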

\begin{proof}
Let $f$ be the $*$-reduction from $E$ to $F$ and let $A$ be a Baire measurable $E$-invariant subset of $X$. Let $B$ be a $G_{\delta}$ subset of $A$ so that $A\setminus B$ is meager. Every Baire measurable map is continuous when restrict to some comeager set, so by restricting $B$ further if necessary we can assume that $f\res B$ is continuous. Since $f\res B$ is continuous, its image $f(B)$ is analytic, and hence so is its $F$-saturation $C = [f(B)]$. Then $C$ is an $F$-invariant Baire measurable subset of $Y$, and it is therefore either meager or comeager. Since $f$ is category preserving and $A\setminus M \subseteq f^{-1}(C)\subseteq A$ for some meager set $M$, it follows that $A$ is meager or comeager as well. 
\end{proof}

We restrict now our attention entirely to orbit equivalence relations. Recall that a sequence $(g_n)$ in $G$ is Cauchy with respect to some left invariant metric on $G$ if and only if it is Cauchy with respect to any left invariant metric on $G$; see \cite{Becker}. In this case we say that $(g_n)$ is {\bf left-Cauchy}. Similarly we define when $(g_n)$ is right-Cauchy. Let $X$ be a Polish $G$-space and let $x,x'\in X$. Following Becker \cite{Becker}, we say that {\bf $x$  left-embeds in $x'$} if there is a left-Cauchy sequence $(g_n)$ so that $(g_n x)$ converges to $x'$. Similarly we say that {\bf $x$  right-embeds in $x'$} if there is a right-Cauchy sequence $(g_n)$ so that $(g_n x')$ converges to $x$. 

We recall now some definitions and a result from \cite{GOP}. In view of our applications in Section \ref{S: Applications}, we are going to develop here everything in terms of right-embeddings. 
All results hold equally for left-embeddings. The {\bf right Becker digraph} $\mathcal{B}(X/G)$ associated to a Polish $G$-space is a graph on domain $X/G=\{[x]\mid x\in X\}$, whose arrows are precisely all pairs $([x],[x'])$ such that $x$ right-embeds in $x'$.  The main anti-classification result developed in \cite{GOP} was a consequence of the following proposition.  
Recall that an {\bf $(E,F)$-homomorphism} is any map $f\colon X\to Y$ with \[x E x' \implies f(x) F f(x').\]  An $(E,F)$-homomorphism $f$ induces a map $[f]\colon X/E\to Y/F$ between the quotients, sending $[x]$ to $[f(x)]$. This map is injective if and only if $f$ is a reduction. 

\begin{proposition}[\cite{GOP}, Proposition 2.8]\label{Prop}
Let $G,H$ be Polish groups, let $X$ be a Polish $G$-space, and let let $Y$ be a Polish $H$-space. If $f\colon X\to Y$ is Baire measurable $(E^G_X,E^H_Y)$-homomorphism, then there is a $G$-invariant dense $G_{\delta}$ subset $X_0$ of $X$ so that the induced map $[f]\colon X_0/G\to Y/H$ is a graph homomorphism from the digraph $\mathcal{B}(X_0/G)$ to the digraph $\mathcal{B}(Y/H)$.
\end{proposition}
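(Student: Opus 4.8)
The plan is to reduce the statement to a single implication and then produce $X_0$ by standard Baire-category arguments. Since $f$ is an $(E^G_X,E^H_Y)$-homomorphism, the induced map $[f]$ on quotients is automatically well defined, so what must be shown is that, after passing to a suitable $X_0$, $[f]$ sends every arrow of $\mathcal{B}(X_0/G)$ to an arrow of $\mathcal{B}(Y/H)$: for $x,x'\in X_0$, if $x$ right-embeds in $x'$ then $f(x)$ right-embeds in $f(x')$. To get $X_0$, I would first fix a dense $G_\delta$ set $C\subseteq X$ on which $f$ is continuous. The action map $\pi\colon G\times X\to X$ equals the second-coordinate projection precomposed with the homeomorphism $(g,x)\mapsto(g,gx)$ of $G\times X$, so $\pi^{-1}(C)=\{(g,x):gx\in C\}$ is a comeager $G_\delta$ subset of $G\times X$. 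By Kuratowski--Ulam, $X_0:=\{x\in X:\{g\in G:gx\in C\}\text{ is comeager in }G\}$ is comeager; it is $G$-invariant, since the slice of $\pi^{-1}(C)$ over $gx$ is a right-translate of the slice over $x$; and, writing $\pi^{-1}(C)=\bigcap_nR_n$ with $R_n$ open, the condition ``the slice of $R_n$ over $x$ is dense'' defines a $G_\delta$ subset of $X$, so $X_0$ is itself a $G_\delta$. Thus $X_0$ is a $G$-invariant dense $G_\delta$.

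Next, given $x,x'\in X_0$ with $x$ right-embedding in $x'$, witnessed by a right-Cauchy $(g_n)$ with $g_nx'\to x$, I would first reduce to the case $x\in C$: since $\{g:gx\in C\}$ is comeager, pick $g_0$ with $g_0x\in C$; then $g_0x\in X_0$, the sequence $(g_0g_n)$ is right-Cauchy with $g_0g_nx'\to g_0x$, and any right-embedding witness for $f(g_0x)$ into $f(x')$ transfers to one for $f(x)$ into $f(x')$ by a left-translation (possible because $f(g_0x)$ and $f(x)$ are $E^H_Y$-equivalent and conjugation in $H$ is continuous, hence preserves right-Cauchyness). Assuming $x\in C$: each $g_nx'$ lies in $X_0$, so I can pick $k_n\to e$ in $G$ with $k_ng_nx'\in C$; the sequence $g_n':=k_ng_n$ is still right-Cauchy and still satisfies $g_n'x'\to x$, and now every $g_n'x'$ and $x$ itself lie in $C$, so continuity of $f$ on $C$ gives $f(g_n'x')\to f(x)$. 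Writing $f(g_n'x')=h_nf(x')$ (possible since $g_n'x'$ is in the orbit of $x'$), we have $h_nf(x')\to f(x)$; it remains to arrange that $(h_n)$ is \emph{right-Cauchy}.

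This last step is the heart of the matter, and where I expect the real difficulty to be: an arbitrary sequence $(h_n)$ with $h_nf(x')\to f(x)$ need not be right-Cauchy, so the $h_n$ must be chosen coherently, so as to transport the right-Cauchyness of $(g_n')$ through $f$. The device I would use is a Borel section $s$ of the orbit map $q\colon H\to[f(x')]$ (which exists, as $[f(x')]$ is a Borel subset of $Y$ and the stabilizer of $f(x')$ is closed): the composite $\Psi:=s\circ f\circ(\,\cdot\,x')$ is a Borel map on the comeager set $\{g:gx'\in C\}$ with $\Psi(g)f(x')=f(gx')$, hence continuous on a comeager set $U\subseteq G$. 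After arranging (by throwing the comeager condition $g_n'\in U$ into the perturbation above) that all $g_n'$ lie in $U$, I would extract a right-Cauchy \emph{subsequence} recursively: at stage $n$, apply continuity of $\Psi$ at the current term to find a neighbourhood on which $\Psi$ varies by at most $2^{-n}$ in a fixed right-invariant metric on $H$, and then select the next term of the subsequence inside that neighbourhood while keeping $g_n'x'$ converging to $x$; the images under $\Psi$ then form a right-Cauchy sequence which, applied to $f(x')$, converges to $f(x)$. The main obstacle is exactly the compatibility of these two demands on the next term --- re-perturbing to remain inside a prescribed $\Psi$-continuity neighbourhood while still converging to $x$ --- and meeting it is where one genuinely uses that the witnessing sequence is right-Cauchy in $G$ (so that its terms cluster near a single point of the right-completion of $G$), rather than merely convergent in $X$; this is precisely the sort of fact supplied by Becker's analysis of Polish group completions \cite{Becker}.
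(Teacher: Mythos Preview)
Your overall architecture is right, and the reductions in the first two paragraphs (constructing the invariant dense $G_\delta$ set $X_0$, replacing $x$ by a translate in $C$, perturbing $g_n$ to $g_n'$ with $g_n'x'\in C$ while keeping right-Cauchyness) all go through. The genuine gap is exactly where you flag it: producing a right-Cauchy $(h_n)$ in $H$. Your proposed device---a Borel section $s$ of the orbit map and pointwise continuity of $\Psi=s\circ f\circ(\cdot\,x')$ on a comeager $U\subseteq G$---does not suffice. The recursive selection you sketch is circular: to guarantee that the next selected term $g'_{n_{k+1}}$ lands in the continuity neighborhood $V_k$ of $g'_{n_k}$, you need $n_k$ to already exceed the right-Cauchy threshold for a neighborhood $W_k$ of $e_G$ with $W_kg'_{n_k}\subseteq V_k$; but $W_k$ (hence that threshold) is only determined \emph{after} $n_k$ is chosen, and the moduli of continuity along the sequence may shrink faster than the right-Cauchy rate. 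As a sanity check that mere continuity cannot carry the argument: inversion $g\mapsto g^{-1}$ is a homeomorphism of $S_\infty$, yet it sends a right-Cauchy sequence to a left-Cauchy one, which generically has no right-Cauchy subsequence. Your $\Psi$ is of course constrained by $\Psi(g)f(x')=f(gx')$, but your argument never uses this beyond continuity, and even with it a section only hands you \emph{some} $h$ moving $f(x')$ to $f(gx')$, with no control on $h$ modulo the (possibly large) stabilizer of $f(x')$.

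The paper's route (sketched inside the proof of Theorem~\ref{T: main}, following \cite{GOP} and ultimately Hjorth) does not use a section. It first upgrades ``$f$ continuous on a dense $G_\delta$'' to the orbit continuity lemma (Lemma~\ref{L: orbit continuity}): there is a dense $G_\delta$ set $\widetilde{C}$ on which $f$ is continuous, orbits meet $\widetilde{C}$ comeagerly, and for every $x_0\in\widetilde{C}$ and $W\subseteq_1 H$ there exist $U\ni x_0$ open in $X$ and $V\subseteq_1 G$ such that for all $x\in U\cap\widetilde{C}$ and comeagerly many $g\in V$, $f(gx)\in Wf(x)$. This is a statement \emph{uniform in $x$ near $x_0$}, obtained by a Kuratowski--Ulam argument on the set $\{(g,x):f(gx)\in Wf(x)\}$, and it is strictly stronger than pointwise continuity of any section. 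With it, one takes $x_0=x$, perturbs the right-Cauchy $(g_n)$ so that each $g_nx'\in U\cap\widetilde{C}$ and each increment $g_{n+1}g_n^{-1}$ lies in the generic part of the relevant $V$, and then \emph{recursively chooses} $h_{n+1}\in W_nh_n$ witnessing $h_{n+1}f(x')=f(g_{n+1}x')$, with $(W_n)$ a shrinking neighborhood basis of $e_H$. Property~(c) is exactly what guarantees such a small increment $h_{n+1}h_n^{-1}\in W_n$ exists. This uniform estimate is the missing ingredient in your sketch; once you have it, the rest of your outline is correct.
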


The above proposition was used in \cite{GOP} in order to show that certain equivalence relations do not reduce in a Baire measurable fashion to any orbit equivalence relation of a CLI group action. In particular, if $H$ is CLI then $\mathcal{B}(Y/H)$ contains only loops~\cite[Lemma 2.7]{GOP}. Hence if $\mathcal{B}(X/G)$ contains non-trivial edges in every invariant $G_\delta$ subset $X_0$ of $X$ then, by Proposition  \ref{Prop}, every Baire measurable $(E^G_X,E^H_Y)$-homomorphism would fail to be a reduction.

In the context of $*$-reductions, we can strengthen the conclusion of Proposition \ref{Prop} so that $[f]\res (X_0/G)$ is an embedding of digraphs rather than an injective digraph homomorphism. This is the essence of Theorem \ref{T: main}. For the proof we will need the following  minor strengthening of \cite[Lemma 2.5]{GOP}\footnote{This is essentially \cite[Lemma 3.17]{Hjorth} modified as in the beginning of the proof of \cite[Theorem 3.18]{Hjorth}.}.

\begin{lemma}\label{L: orbit continuity}
Suppose that $G,H$ are Polish groups, $X$ is a Polish $G$-space, and $Y$ is a Polish $H$-space. Let $C\subseteq X$ be a $G_{\delta}$ subset of $X$ such that for any $x\in C$ the set $\{g\in G \mid gx\in C \}$ is comeager in $G$. Let $f\colon C\to Y$ be a Baire measurable homomorphism from the equivalence relation $E^G_X\res C$ to $E^H_Y$. Then there is a dense in $C$, $G_{\delta}$ subset $\widetilde{C}$ of $C$ so that: 
\begin{enumerate}[(a)]
\item $f\res \widetilde{C}$ is continuous;
\item for any $x\in \widetilde{C}$ the set $\{g\in G \mid gx\in \widetilde{C} \}$ is comeager in $G$;
\item for any $x_0\in \widetilde{C}$ and for any open neighborhood $W$ of the identity in $H$ there exists an open neighborhood $U$ of $x_0$ and an open neighborhood $V$ of the identity of $G$ such that for any $x\in U\cap \widetilde{C}$ and for a comeager set of $g\in V$, we have that $f(gx)\in W f(x)$ and $gx\in \widetilde{C}$.
\end{enumerate}
\end{lemma}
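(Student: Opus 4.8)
The plan is to combine a standard ``Baire-measurable-to-continuous'' shrinking with a Kuratowski--Ulam bookkeeping argument, exactly in the spirit of \cite[Lemma 2.5]{GOP} and \cite[Lemma 3.17]{Hjorth}, and then to run a single additional shrinking pass to force the ``locally uniform'' clause (c). First I would fix a countable basis $\{W_k\}$ of open neighborhoods of the identity of $H$ and a countable basis $\{U_i\}$ of $X$ (or of $C$, in its subspace topology), and a compatible left-invariant metric on $G$. Since $f$ is Baire measurable, there is a dense $G_\delta$ set $C_1 \subseteq C$ on which $f$ is continuous; intersecting with the $G_\delta$ set of $x\in C$ for which $\{g : gx\in C_1\}$ is comeager (this set is $G_\delta$ and dense in $C$ by Kuratowski--Ulam applied to the $G_\delta$ set $\{(g,x) : gx\in C_1\}$, using the hypothesis on $C$), and iterating this ``closing-off'' operation $\omega$ times and taking the intersection, I obtain a dense $G_\delta$ set $\widetilde C_0\subseteq C$ satisfying (a) and (b). This is verbatim \cite[Lemma 2.5]{GOP}; the only new content is (c).

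For (c), I would argue as follows. Work inside $\widetilde C_0$. Fix $k$. By continuity of the action $G\times X\to X$ and of $f$ on $\widetilde C_0$, for each $x_0\in\widetilde C_0$ there are basic open $U\ni x_0$ and a symmetric open $V\ni 1_G$ such that for all $x\in U\cap\widetilde C_0$ and all $g\in V$ with $gx\in\widetilde C_0$ we have $f(gx)\in W_k f(x)$ --- this is immediate once $U,V$ are small enough, since $f(1_G\cdot x)=f(x)$ and both $gx$ and $f(gx)$ depend continuously on $(g,x)$ along $\widetilde C_0$. The subtlety is the phrase ``for a comeager set of $g\in V$'' together with ``$gx\in\widetilde C_0$'': we need that for a comeager set of $g\in V$, \emph{simultaneously} $gx\in\widetilde C_0$ and the displayed inclusion holds. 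But by (b), $\{g\in G : gx\in\widetilde C_0\}$ is comeager in $G$, hence comeager in $V$; and on that comeager set the inclusion $f(gx)\in W_k f(x)$ holds by the previous sentence. So in fact (c) follows from (a), (b), and joint continuity, with no further shrinking needed --- the real work is only in getting (a) and (b) to hold on a single dense $G_\delta$ set, which is the classical argument.

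The step I expect to be the only genuine obstacle is the careful verification that the iterated closing-off construction preserves both density in $C$ and the comeager-stabilizer property simultaneously, i.e., that after intersecting countably many stages one still has a \emph{dense} $G_\delta$ (density can be lost under countable intersection in general). The standard fix, which I would follow, is to do the construction so that at each stage one only removes a meager set and one maintains the invariance-up-to-meager condition, using that $C$ itself has the property that stabilizers $\{g : gx\in C\}$ are comeager: this makes each stage's exceptional set ``fiberwise meager,'' Kuratowski--Ulam turns it into a genuinely meager subset of $C$, and density of the $G_\delta$ intersection follows because $C$ is a dense $G_\delta$ in its closure and removing countably many meager sets from a dense $G_\delta$ leaves a dense $G_\delta$. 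Once this bookkeeping is set up as in \cite{GOP}, clause (c) is essentially free, as explained above, which is why this is called a ``minor strengthening.''
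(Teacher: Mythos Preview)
Your argument for clause (c) contains a genuine gap. You claim that once $f$ is continuous on $\widetilde{C}_0$, joint continuity of the action gives, for $(g,x)$ near $(1_G,x_0)$ with $x, gx \in \widetilde{C}_0$, that $f(gx) \in W_k f(x)$. But continuity only tells you that $f(gx)$ and $f(x)$ are both close to $f(x_0)$ \emph{in the topology of $Y$}; the conclusion $f(gx)\in W_k f(x)$ asks that some $h\in H$ with $f(gx)=h f(x)$ can be taken in the small neighborhood $W_k$. These are different statements: closeness in $Y$ yields the latter only when the orbit map $h\mapsto h\cdot f(x_0)$ is open onto its image, i.e.\ when the $H$-orbit of $f(x_0)$ is $G_\delta$ (Effros), and no such hypothesis is available here. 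For instance, with $H=\mathbb{Z}$ acting on $Y=\mathbb{T}$ by an irrational rotation and $W_k=\{0\}$, distinct orbit-equivalent points can be arbitrarily close in $Y$ while never satisfying $y\in W_k z$.

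This is precisely why the paper, following \cite[Lemma 2.5]{GOP} and \cite[Lemma 3.17]{Hjorth}, does \emph{not} derive (c) from (a) and (b). Instead it first establishes, for each fixed $W\subseteq_1 H$, the category statement
\[
\forall x_0\in C\ \ \forall^* g_0\in G\ \ \exists V\subseteq_1 G\ \ \forall^* g_1\in V\quad f(g_1 g_0 x_0)\in W f(g_0 x_0),
\]
and only then applies Kuratowski--Ulam to produce a dense $G_\delta$ set $C_0\subseteq C$ on which the $\exists V\,\forall^* g$ clause holds pointwise; the final $\widetilde C$ is built from $C_0$ as in \cite{GOP}. The displayed statement is obtained by working in $G$ rather than in $Y$: one uniformizes the analytic relation $\{(g,h):f(gx_0)=h f(x_0)\}$ by a Baire measurable $\sigma\colon G\to H$, passes to a comeager set on which $\sigma$ is continuous, and notes that for $g_0$ there and $g_1$ near $1_G$ one has $\sigma(g_1 g_0)\sigma(g_0)^{-1}\in W$, hence $f(g_1 g_0 x_0)\in W f(g_0 x_0)$. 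Continuity of $\sigma$ gives closeness \emph{in $H$}, which is exactly what (c) demands; continuity of $f$ alone cannot supply this. So (c) is not ``essentially free'' once (a) and (b) hold---it is where the substance of the lemma lies, and your plan does not address it.
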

\begin{proof}

Let $V\subseteq_1 G$ stand for ``$V$ is an open neighborhood of identity in $G$'' and let $\forall^*x\in A$ stand for ``for a comeager collection of elements in $A$.''  

The proof is exactly the same as in \cite[Lemma 2.5]{GOP} so we will omit the details. First one needs to show that for any fixed $W\subseteq_1 H$ we have that 
\[\forall x_0\in C \;\; \forall^* g_0\in G \;\; \; \exists V\subseteq_1 G \;\; \; \forall^{*} g_1 \in V \;\;\; f(g_1 g_0 x_0)\in  W f(g_0 x_0).\]
Then by an application of Kuratowski-Ulam theorem, and since by assumption we have that $\forall x\in C \; \;\forall^* g\in G \; \;   gx\in C$, we get a dense $G_{\delta}$ subset $C_0$ of $C$ so that 
\[\forall x\in C_0 \;\; \; \exists V\subseteq_1 G \;\; \; \forall^{*} g \in V \;\;\; f(g x)\in  W f(x).\]
Then we use $C_0$ exactly as in the proof of \cite[Lemma 2.5]{GOP} to define $C_1$ and we finish by setting $\widetilde{C}:=\{x\in C_1 \mid \forall^* g\in G\; \; gx\in C_1\}$. 
\end{proof}

We now proceed to the proof of Theorem \ref{T: main}.

\begin{proof}[Proof of Theorem \ref{T: main}]
Let $f\colon X\to Y$ be as in the statement of the theorem.  
As in \cite[Lemma 2.8]{GOP}, if $\widetilde{C}$ is the set provided by Lemma \ref{L: orbit continuity} with $C=X$, then setting $X'_0=\{x\in X\mid \forall^* g\in G \; gx\in \widetilde{C}\}$ we get the invariant dense $G_{\delta}$ set appearing in the statement of Proposition \ref{Prop}, i.e. we get that $[f]\colon X'_0/G\to Y/H$ is a homomorphism from $\mathcal{B}(X'_0/G)$ to $\mathcal{B}(Y/H)$. For the convenience of the reader we include a brief sketch: assume that $x,x'$ are in $X'_0$  and $x$ right-embeds in $x'$. Since this is a $G$-invariant property we can assume that $x,x'$ are in $\widetilde{C}$. Let $(g_n)$ be a right-Cauchy sequence with $g_n x'\to x$. As in \cite[Lemma 2.3]{GOP} we can use properties (b) and (c) of Lemma \ref{L: orbit continuity} to slightly modify $(g_n)$ so that $g_nx$ is in $\widetilde{C}$ and so that there is a right-Cauchy sequence $(h_n)$ in $H$ with $f(g_n x)=h_n f(x)$, for all $n>0$. By property (a) of Lemma \ref{L: orbit continuity} we get that $h_n f(x)\to f(x')$.

Since $f$ is by assumption a reduction we also have that $[f]$ is injective.
In what follows, we will intersect $X'_0$ with another invariant dense $G_{\delta}$ set $X''_0$ with the property that for every $x,x'\in X''_0$, if $f(x)$ right-embeds in $f(x')$, then $x$ right-embeds in $x'$. The desired sets will then be: $X_0=X'_0\cap X''_0$, and $Y_0=[f(X_0)]$.

\begin{claim}
There exists $D\subseteq Y$ and $h\colon D\to X$ so that: 
\begin{enumerate}
\item $D$ is a $G_{\delta}$ subset of $Y$ with $f^{-1}(D)$ dense $G_{\delta}$ in $X$;
\item for every  $ y\in D$  we have that $\{g\in H \mid g y \in D\}$ is a comeager subset of $H$;
\item $h$ is a Baire measurable reduction from $E^H_Y\res D$ to $E^G_X$, with $[f]\circ [h]=\mathrm{id}$ and $[h]\circ [f\res f^{-1}(D)]=\mathrm{id}$.
\end{enumerate}
\end{claim}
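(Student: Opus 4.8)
The plan is to produce the "inverse" map $h$ by running the machinery that gave us $f$, but now applied to $f$ viewed as a partial map in the other direction. Concretely, I would start with the dense $G_\delta$ set $\widetilde{C}$ from Lemma~\ref{L: orbit continuity} (with $C=X$) on which $f$ is continuous, and pass to the invariant dense $G_\delta$ set $X'_0$ already constructed in the proof. Because $f$ is category preserving, the image set $f(X'_0)$ cannot be ``generically negligible'': more precisely, since $f^{-1}(M)$ is meager for every meager $M\subseteq Y$, the contrapositive tells us that if $N\subseteq Y$ is such that $f^{-1}(N)$ is non-meager then $N$ is non-meager; applying this to a dense $G_\delta$ $X_1\subseteq X'_0$ on which $f$ is a continuous \emph{injection} into $Y$ (restricting further so that $f\res X_1$ is injective is possible after intersecting with the invariant $G_\delta$ witnessing that distinct orbits have distinct images, which exists because $f$ is a reduction), we find that $f(X_1)$ is a non-meager Baire measurable (in fact analytic) subset of $Y$. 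Then the desired $D$ should be obtained as a dense $G_\delta$ subset of $[f(X_1)]$ on which the partially-defined inverse of $f\res X_1$ is continuous, and $h$ should be this inverse, extended by invariance.

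Here are the steps in order. First, I would record the category-preservation consequence as a lemma: for analytic (or Baire measurable) $N\subseteq Y$, $f^{-1}(N)$ non-meager implies $N$ non-meager; equivalently $f^{-1}(N)$ comeager in some open set forces $N$ non-meager there. Second, refine $X'_0$ to a dense $G_\delta$ set $X_1$ on which $f$ is continuous and injective, and such that $\{g\in G\mid gx\in X_1\}$ is comeager for every $x\in X_1$ (using property (b) of Lemma~\ref{L: orbit continuity} and closing up as in the definition of $\widetilde C$). Third, argue that $A:=f(X_1)$ is analytic and non-meager in $Y$; by the Baire property, there is a dense $G_\delta$ set $D'\subseteq \overline{\,[A]\,}^{\mathrm{int}}$ (an open set $U$ in which $A$, hence $[A]$, is comeager) with $D'\subseteq [A]$. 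Intersecting $D'$ with the invariant $G_\delta$ set obtained from running Lemma~\ref{L: orbit continuity} for the map ``pick an $X_1$-preimage'' — and here one needs that preimage map to be Baire measurable, which follows since $f\res X_1$ is a continuous injection between Polish spaces so its inverse on the Borel set $A$ is Borel (Lusin–Souslin) — I would obtain $D$ and a Baire measurable $h_0\colon D\cap A\to X_1$ with $f(h_0(y))=y$. Fourth, extend $h_0$ to all of $D$ by $H$-invariance of the \emph{orbit-valued} map: set $h(y)$ to be a Borel selector of $[h_0(y')]$ where $y'\in [y]\cap A\cap D$; since we only need $[h]$, any Baire measurable choice works, and conditions (2), (3) of the Claim then become bookkeeping: (2) is exactly the comeagerness-of-return-set clause, arranged by the final ``$\forall^* g$'' closure; (3) says $h$ is a reduction with $[f]\circ[h]=\mathrm{id}$ and $[h]\circ[f\res f^{-1}(D)]=\mathrm{id}$, which hold because on representatives $f\circ h_0=\mathrm{id}$ and $h_0\circ(f\res X_1)=\mathrm{id}$ by construction, and both $f$ and the orbit-extension respect the equivalence relations.

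The main obstacle I expect is Step three: converting ``$f(X_1)$ is non-meager'' into a genuine dense $G_\delta$ subset $D$ of an invariant set on which a \emph{continuous} (or at least Baire measurable) inverse is defined, while simultaneously guaranteeing clause (2), i.e. that for each $y\in D$ the return set $\{g\in H\mid gy\in D\}$ is comeager in $H$. The subtlety is that $f(X_1)$ need only be non-meager, not comeager, so $[f(X_1)]$ is only non-meager-in-some-open-$U$; one must localize to $U$, then take the invariant dense $G_\delta$ \emph{inside} the invariant open set $[U]$, and finally run the Kuratowski–Ulam closure to get clause (2). Getting the quantifier order right in this localization — and checking that the resulting $D$ still has $f^{-1}(D)$ dense $G_\delta$ in \emph{all} of $X$ rather than just in $f^{-1}([U])$, which is where category-preservation is used a second time (a set whose preimage omits a non-meager piece cannot be all of $X$, but we need the preimage comeager, so we must have arranged $U$ so that $f^{-1}([U])$ is comeager, which in turn uses that $Y\setminus[U]$ meager-relative-to-its-closure pulls back to meager) — is the delicate point. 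Once $D$ and $h_0$ are in hand, the verification of (1)–(3) and the subsequent deduction that $[f]\res(X_0/G)$ is a digraph embedding (by symmetry: $f(x)$ right-embeds in $f(x')$ implies, applying the already-proved homomorphism property of $[h]$ together with $[h]\circ[f]=\mathrm{id}$, that $x$ right-embeds in $x'$) is routine.
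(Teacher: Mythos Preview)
Your proposal has a genuine gap at the point where you claim you can pass to a dense $G_\delta$ set $X_1$ on which $f$ is \emph{injective}. Being a reduction only says that $[f]$ is injective on the quotient: $f(x)\,E^H_Y\,f(x') \Rightarrow x\,E^G_X\,x'$. It does not prevent $f$ from collapsing many points of a single $G$-orbit to one point of $Y$, and there is no ``invariant $G_\delta$ witnessing that distinct orbits have distinct images'' that repairs this --- that property already holds on all of $X$ and is irrelevant to pointwise injectivity. (For a concrete obstruction, take $H$ trivial so that $F$ is equality; then any reduction is constant on $G$-orbits and cannot be made injective on any set meeting an infinite orbit.) Consequently your appeal to Lusin--Souslin for a Borel inverse is unjustified, and your Step~5 ``extend $h_0$ by invariance using a Borel selector'' is itself a measurable-selection problem you have not solved.

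The paper sidesteps both issues by applying the Jankov--von Neumann uniformization theorem directly to the analytic relation $\{(y,x)\in A\times C_0\mid y\,E^H_Y\,f(x)\}$, where $A=[f(C_0)]$ is the $H$-saturation of the image of a dense $G_\delta$ set $C_0$ on which $f$ is continuous. This produces a Baire measurable $h'\colon A\to C_0$ with $[f]\circ[h']=\mathrm{id}$ in one stroke, with no injectivity hypothesis and no separate extension step. This also dissolves your localization worry: since $A$ is $H$-invariant, one takes a $G_\delta$ set $D_0\subseteq A$ with $B:=A\setminus D_0$ meager in $Y$, sets $D=\{y\in D_0\mid \forall^*g\in H\ gy\in D_0\}$, and observes that $D=A\cap E$ where $E=\{y\in Y\setminus B\mid \forall^*g\ gy\in Y\setminus B\}$ is comeager in $Y$ (invariance of $A$ gives $gy\in Y\setminus B\iff gy\in D_0$ for $y\in A$). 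Then $f^{-1}(D)=f^{-1}(A)\cap f^{-1}(E)$ is comeager in $X$ because $f^{-1}(A)\supseteq C_0$ and $f^{-1}(E)$ is comeager by category preservation --- no passage to an open $U\subseteq Y$ is needed.
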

\begin{proof}[Proof of Claim.]
Since $f$ is Baire measurable, we can find a dense $G_{\delta}$ subset $C_0$ of $X$ so that $f\res C_0$ is continuous on $C_0$. Let $A=[f(C_0)]\subseteq Y$ be the $H$-saturation of the image of $C_0$ under $f$. 
Since $F=\{(y,x)\in A\times C_0 \mid y E^H_Y f(x)\}$ is an analytic set, by the Jankov--von Neumann selection theorem \cite[Theorem 18.1]{Kechris} there is a $\sigma(\Sigma^1_1)$-measurable, and hence Baire measurable, map $h'\colon A \to C_0$ uniformizing $F$. Let $D_0$ be a $G_\delta$ subset of $A$ so that $B:=A\setminus D_0$ is meager. We claim that the set $D=\{y\in D_0 \mid \forall^* g\in H\, g y \in D_0\}$ and the map $h=h'\res D$ are as required. To see this notice that property (3) follows immediately from the fact that $h'$ uniformizes $F$, and property (2) follows from \cite[Proposition 3.2.5(v)]{Gao}. For property (1), $D$ is $G_{\delta}$ since it is the Vaught transform of a $G_{\delta}$ set intersected with a $G_{\delta}$ set; see \cite[Proposition 3.2.7]{Gao}. Finally, notice that since $A$ is $H$-invariant we have that 
\[f^{-1}(D)= f^{-1}\bigg(\bigg\{y\in (Y\setminus B)  \mid \forall^* g\in H\, g y \in (Y\setminus B)\bigg\}\bigg),\]
and the set $\big\{y\in (Y\setminus B)  \mid \forall^* g\in H\, g y \in (Y\setminus B)\big\}$ is comeager in $Y$ by Kuratowski-Ulam. Since $f$ preserves category and it is continuous on the dense $G_{\delta}$ subset $C_0$ of $X$,  $f^{-1}(D)$ is a dense $G_{\delta}$ subset of $X$ as well.
\end{proof}
By Lemma \ref{L: orbit continuity} applied to $h\colon D\to X$ we can now find a dense $G_\delta$ subset $\widetilde{D}$ of $D$ satisfying the conclusions of Lemma \ref{L: orbit continuity}. 
By the proof of \cite[Proposition 2.8]{GOP}, as described in the first paragraph of this proof, for every $y,y'\in \widetilde{D}$, if $y$ right-embeds in $y'$ then $h(y)$ right-embeds in $h(y')$.

Setting  now $X''_0$ to be the Vaught transform $\{x\in X \mid \forall^*g\in G \; gx\in f^{-1}(\widetilde{D})\}$ of $f^{-1}(\widetilde{D})$  we get the desired sets: $X_0=X'_0\cap X''_0$ and $Y_0=[f(X_0)]$.
To see this, let $x,x'\in X''_0$ so that  $f(x)$ right-embeds in $f(x')$. By replacing $x,x'$ with a generic translate we can assume without the loss of generality that $x,x'\in f^{-1}(\widetilde{D})$. Since $f(x),f(x')\in \widetilde{D}$ we get that $h(f(x))$ right-embeds in $h(f(x'))$. Property (3) of the above claim implies that $x$ right-embeds in $x'$. Finally, since $[f(X_0)]=[f(X_0\cap\widetilde{C})]$ and $f\res\widetilde{C}$ is continuous, we have that $Y_0$ is analytic and hence Baire measurable. Moreover, since $f^{-1}(Y_0)=X_0$,  $f$ is category preserving, and $X_0$ is comeager in $X$, $Y_0$ cannot be meager in $Y$.  
\end{proof}

\section{Higher dimensional obstructions}\label{S: Applications}

For every $n\geq 0$, the {\bf combinatorial $n$-cube} is the poset category $\Delta^{n-1}$, whose set of objects is the powerset $\mathcal{P}(\{0,\ldots,n-1\})$ of $[n] = \{0,\ldots,n-1\}$ and whose arrows are precisely the inclusions $\sigma\subseteq\tau$ between subsets $\sigma,\tau$ of $\{0,\ldots,n-1\}$. For $n=0$, the combinatorial $n$-cube is the one-object poset $\Delta^{-1}=\mathcal{P}(\emptyset)=\{\emptyset\}$.
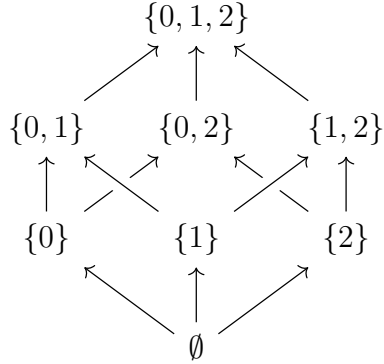
\begin{figure}[!ht]
\centering
\begin{tikzcd}[column sep=1 em,row sep=0.9em]
& \{0,1,2\} & \\
& & \\
\{0,1\} \arrow[uur]  & \{0,2\} \arrow[uu] \arrow[from=ddr]& \{1,2\} \arrow[uul] \\ 
& & \\
\{0\}\arrow[uu]\arrow[uur] & \{1\}  \arrow[uul, crossing over] \arrow[uur, crossing over]& \{2\} \arrow[uu]\\
& & \\
& \emptyset \arrow[uul]\arrow[uu]\arrow[uur] & \\
\end{tikzcd}
\caption{Hasse diagram of the combinatorial $3$-cube.}
\end{figure}

Forgetting for a moment the composition law between the arrows we can view $\Delta^{n-1}$ as a digraph. Let $X$ be a Polish $G$-space and let $C$ be a non-empty $G$-invariant subset of $X$. The {\bf dimension of $C$} is the largest natural number $n\geq 0$ so that the combinatorial $n$-cube embeds in $\mathcal{B}(C/G)$, if such $n$ exists; and it is $\infty$ otherwise. Notice that for every $C$ as above the dimension of $C$ is at least $0$. We say that $X$ is {\bf generically $n$-dimensional} if $n$ is the largest element in  $\{0,1,\ldots\}\cup\{\infty\}$ so that the dimension of every invariant comeager subset $X$ is at least $n$. Similarly we say that $X$ is {\bf locally generically $n$-dimensional} if $n$ is the largest element in  $\{0,1,\ldots\}\cup\{\infty\}$ so that the dimension of every invariant non-meager subset of $X$ with the Baire property is at least $n$. If $\mathrm{dim}(X)$ denotes the dimension of $X$, $\mathrm{dim}^{*}_{\forall}(X)$ denotes the generic dimension of $X$, and $\mathrm{dim}^{*}_{\exists}(X)$ denotes the locally generic dimension of $X$, then we always have that 
$\mathrm{dim}(X)\geq \mathrm{dim}^{*}_{\forall}(X)\geq \mathrm{dim}^{*}_{\exists}(X)$.
While the inequalities are strict in general, the last two quantities are equal whenever $X$ is generically ergodic.



The obstruction, developed in \cite{GOP}, for classifying orbit equivalence relations by CLI group actions relied on the fact that the image of a combinatorial  $1$-cube under an injective digraph homomorphism can never be a self-loop.  Working with  digraph embeddings---rather than just injective homorphisms---allows us to utilize combinatorial $n$-cubes as obstructions for classification under $*$-reductions, even when $n>1$.

\begin{theorem}\label{T: main 2}
Let  $X$ be a Polish $G$-space and let $Y$ be a Polish $H$-space where $G$ and $H$ are Polish groups. Let also $n\in \{0,1,2,\ldots\}\cup\{\infty\}$.
\begin{enumerate}
\item If $X$ is generically $n$-dimensional and the dimension of $Y$ is less than $n$, then $E^G_X$ does not $*$-reduce to $E^H_Y$.
\item If the dimension of $X$ is less than $n$ and $Y$ is locally generically $n$-dimensional, then $E^G_X$ does not $*$-reduce to $E^H_Y$. 
\end{enumerate}
\end{theorem}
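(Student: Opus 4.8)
The plan is to derive Theorem~\ref{T: main 2} directly from Theorem~\ref{T: main}, using the elementary fact that digraph embeddings (and injective digraph homomorphisms) are closed under composition and transport finite configurations along in the obvious way. The statement is really a bookkeeping consequence of ``$*$-reductions induce Becker-graph isomorphisms on suitable invariant pieces,'' so I do not expect a genuine obstacle; the only care needed is matching the quantifiers in the definitions of dimension, generic dimension, and locally generic dimension with the quantifiers in the conclusion of Theorem~\ref{T: main}.

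First I would prove part (1). Suppose toward a contradiction that $f\colon X\to Y$ is a $*$-reduction from $E^G_X$ to $E^H_Y$. Apply Theorem~\ref{T: main} to obtain a $G$-invariant dense $G_\delta$ set $X_0\subseteq X$ and an $H$-invariant non-meager Baire-measurable set $Y_0\subseteq Y$ such that $[f]\res(X_0/G)$ is a digraph isomorphism from $\mathcal{B}(X_0/G)$ onto $\mathcal{B}(Y_0/H)$. Since $X_0$ is in particular a $G$-invariant comeager subset of $X$, the hypothesis that $X$ is generically $n$-dimensional gives that $\dim(X_0)\geq n$, i.e.\ the combinatorial $n$-cube embeds into $\mathcal{B}(X_0/G)$. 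Composing this embedding with the isomorphism $[f]\res(X_0/G)$ yields an embedding of the combinatorial $n$-cube into $\mathcal{B}(Y_0/H)$, and then composing with the inclusion of $\mathcal{B}(Y_0/H)$ into $\mathcal{B}(Y/H)$ (which is a digraph embedding, as $Y_0$ is $H$-invariant, so the orbits and the right-embedding relation restrict correctly) gives an embedding of the combinatorial $n$-cube into $\mathcal{B}(Y/H)$. Hence $\dim(Y)\geq n$, contradicting the assumption that the dimension of $Y$ is less than $n$. (When $n=\infty$ the same argument works cube-by-cube: for each finite $m$ the $m$-cube embeds, so $\dim(Y)=\infty$.)

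Part (2) is symmetric, reading the Becker-graph isomorphism in the other direction. Again assume $f\colon X\to Y$ is a $*$-reduction and take $X_0, Y_0$ as in Theorem~\ref{T: main}. Here $Y_0$ is an $H$-invariant non-meager Baire-measurable subset of $Y$, so the hypothesis that $Y$ is locally generically $n$-dimensional gives $\dim(Y_0)\geq n$: the combinatorial $n$-cube embeds into $\mathcal{B}(Y_0/H)$. Composing with the inverse isomorphism $([f]\res(X_0/G))^{-1}\colon \mathcal{B}(Y_0/H)\to\mathcal{B}(X_0/G)$ and then with the inclusion $\mathcal{B}(X_0/G)\hookrightarrow\mathcal{B}(X/G)$ yields an embedding of the combinatorial $n$-cube into $\mathcal{B}(X/G)$, so $\dim(X)\geq n$, contradicting the hypothesis that $\dim(X)<n$. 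The $n=\infty$ case is handled as before. The one point worth spelling out explicitly (and the closest thing to a subtlety) is that for a $G$-invariant subset $C\subseteq X$ the natural map $\mathcal{B}(C/G)\to\mathcal{B}(X/G)$ really is a digraph embedding and not merely a homomorphism: injectivity on vertices is clear since distinct $G$-orbits inside $C$ remain distinct in $X$, and fullness on edges holds because $x$ right-embedding in $x'$ is a statement about a right-Cauchy sequence $(g_n)$ in $G$ with $g_n x'\to x$, which does not refer to $C$ at all — so an arrow in $\mathcal{B}(X/G)$ between vertices lying in $C/G$ is already witnessed inside $\mathcal{B}(C/G)$. With this observation in hand, both parts are immediate.
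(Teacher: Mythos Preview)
Your proposal is correct and is exactly the argument the paper has in mind: the paper's own proof consists of the single sentence ``The proof is a direct consequence of Theorem~\ref{T: main},'' and you have simply spelled out that direct consequence, including the one small point (fullness of the inclusion $\mathcal{B}(C/G)\hookrightarrow\mathcal{B}(X/G)$ for invariant $C$) that makes the bookkeeping go through.
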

\begin{proof}
The proof is a direct consequence of Theorem \ref{T: main}.
\end{proof}

In the rest of this section we develop examples of Polish $G$-spaces $X$ which are $n$-dimensional and locally generically $n$-dimensional.  We then use Theorem \ref{T: main 2} to deduce that these spaces are pairwise incomparable under $*$-reductions.

Let $\mathcal{L}$ be a countable language. We consider the space  $\mathcal{X}_{\mathcal{L}}$  of $\mathcal{L}$-structures with domain $\omega$. For any formula $\varphi(x)$ and any tuple $a$ from $\omega$, define \[[\varphi(a)] = \{A\in \mathcal{X}_{\mathcal{L}} \mid A\models \varphi(a)\}.\] Then a subbasis for the topology on $\mathcal{X}_{\mathcal{L}}$ is given by the sets of the form $[R(a)]$ and $[\lnot R(a)]$ for every $n$-ary relation symbol $R$ in $\mathcal{L}$ and every $n$-tuple $a$ from $\omega$, and $[f(a) = b]$ for every $n$-ary function symbol $f$ in $\mathcal{L}$, every $n$-tuple $a$ from $\omega$, and every element $b\in \omega$. That is, $\mathcal{X}_{\mathcal{L}}$ is a Polish space homeomorphic to a product of Cantor spaces $2^{(\omega^{n})}$ for each $n$-ary relation symbol in $\mathcal{L}$ and Baire spaces $\omega^{(\omega^n)}$ for each $n$-ary function symbol in $\mathcal{L}$. 

Consider now the Polish group $S_\infty$, of all bijections from $\omega$ to $\omega$, endowed with the pointwise convergence topology. There is a natural continuous action of $S_\infty$ on $\mathcal{X}_{\mathcal{L}}$; namely, if $g\in S_\infty$ and $A\in\mathcal{X}_{\mathcal{L}}$, then $gA$ is the unique $B\in\mathcal{X}_{\mathcal{L}}$ so that for every tuple  $a=(a_0,\ldots,a_{n-1})$ in $\omega$ and every quantifier free formula $\varphi$ we have that 
\[B\models \varphi(a_0,\ldots,a_{n-1}) \iff A\models \varphi(g^{-1}(a_0),\ldots,g^{-1}(a_{n-1}))\]
In other words, $g A = B$ if and only if $g$ is an isomorphism $A\to B$. The orbit equivalence relation on $\mathcal{X}_{\mathcal{L}}$ induced by the logic action is denoted $\simeq_{\mathrm{iso}}$. 

Recall that a sequence $(g_n)$ in any Polish group is left-Cauchy if it is Cauchy with respect to some left-invariant metric, and moreover, that this is equivalent with $(g_n)$ being Cauchy with respect to every left-invariant metric; see \cite{Becker}. In $S_{\infty}$ a compatible left-invariant metric  is given by $d_l(g,h)=1/2^m$,  where $m$ is the least natural number with $g(m)\neq h(m)$. Elements $\gamma$ of the left completion of $S_{\infty}$ can be identified with injections  $\gamma\colon \omega\to\omega$ which are not necessarily surjective; see e.g. \cite{Gaopaper}. 
Since for every left-invariant metric $d_l$, in any Polish group, the metric $d_r(g,h):=d_l(g^{-1},h^{-1})$ is right-invariant, we have that a sequence $(g_n)$ in $S_{\infty}$ is right-Cauchy if and only if $(g_n^{-1})$ is left-Cauchy. Similarly to the logic action, where $g A =B$ if and only if  $g$ is an isomorphism from $A$ to $B$, the following proposition states that right-embeddings from $A$ to $B$ correspond to  model-theoretic embeddings from $A$ to $B$. For left-embeddings the situation is a bit more complicated; see \cite{Becker}.

\begin{proposition}\label{Proposition Emb}
Let $A,B\in \mathcal{X}_{\mathcal{L}}$ and let $(g_n)$ be a right-Cauchy sequence in $S_{\infty}$. Let also $\gamma\colon \omega\to \omega$ be the injective map that is the limit of $(g^{-1}_n)$. The following are equivalent:
\begin{enumerate}
\item $(g_n B)$ converges to $A$;
\item $\gamma$ is an embedding from $A$ to  $B$. 
\end{enumerate}
\end{proposition}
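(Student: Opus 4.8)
The plan is to unwind the definitions of convergence in $\mathcal{X}_{\mathcal{L}}$ and of the left-invariant metric on $S_\infty$, and to exploit the fact noted just before the statement that $g A = B$ iff $g$ is an isomorphism $A \to B$. First I would observe that since $(g_n)$ is right-Cauchy, $(g_n^{-1})$ is left-Cauchy, so it converges in the left completion of $S_\infty$ to some injection $\gamma \colon \omega \to \omega$; concretely, for each $m$ the value $g_n^{-1}(m)$ is eventually constant and equal to $\gamma(m)$. The key bookkeeping point is then: for a fixed tuple $a = (a_0,\ldots,a_{k-1})$ from $\omega$ and a fixed quantifier-free formula $\varphi$, once $n$ is large enough that $g_n^{-1}$ agrees with $\gamma$ on all the entries of $a$ (and, if $\mathcal{L}$ has function symbols, on the finitely many further points needed to evaluate the terms of $\varphi$ on $a$), we have
\[
g_n B \models \varphi(a) \iff B \models \varphi(g_n^{-1}(a)) \iff B \models \varphi(\gamma(a)).
\]
Thus the basic-open-set membership of $g_n B$ stabilizes, and what it stabilizes to is controlled by $\gamma$ and $B$.

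For the implication $(1) \Rightarrow (2)$: suppose $g_n B \to A$. Fix an atomic formula $\varphi(x)$ and a tuple $a$. Since the subbasic sets $[\varphi(a)]$ and $[\lnot\varphi(a)]$ are open, convergence gives that $g_n B$ eventually lies in whichever of the two contains $A$; combining this with the stabilization computation above, we get $A \models \varphi(a) \iff B \models \varphi(\gamma(a))$. As $\gamma$ is injective and this equivalence holds for all atomic $\varphi$ and all tuples $a$, $\gamma$ is an embedding $A \to B$. For the converse $(2) \Rightarrow (1)$: suppose $\gamma$ is an embedding. To show $g_n B \to A$ it suffices to check that for each basic open neighborhood of $A$ — determined by finitely many atomic conditions on finitely many tuples — $g_n B$ eventually lies in it. But for each such atomic condition $\varphi(a)$, the stabilization computation shows $g_n B \models \varphi(a)$ eventually iff $B \models \varphi(\gamma(a))$, which holds iff $A \models \varphi(a)$ since $\gamma$ is an embedding; taking the max of the finitely many thresholds finishes it.

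The only mildly delicate point — and the place I would be most careful — is handling languages with function symbols, where "quantifier-free formula" involves terms and a single atomic formula $\varphi(a)$ may depend on $B$'s (or $A$'s) values at points not among the $a_i$, namely the images of $a$ under the term functions. This is exactly why the completion $\gamma$ is allowed to be a non-surjective injection: one must note that for fixed $\varphi$ and $a$ there are still only \emph{finitely many} coordinates of $g_n^{-1}$ that matter, so eventual agreement of $g_n^{-1}$ with $\gamma$ on that finite set is enough, and the term-value bookkeeping goes through. Everything else is a routine diagram-chase through the definition of the product topology on $\mathcal{X}_{\mathcal{L}}$ and the defining property $g_n B \models \varphi(a) \iff B \models \varphi(g_n^{-1} a)$ of the logic action.
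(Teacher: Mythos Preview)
Your proposal is correct and follows essentially the same approach as the paper: both arguments reduce to the observation that $g_n B \models \varphi(a)$ iff $B \models \varphi(g_n^{-1}a)$, that this stabilizes to $B \models \varphi(\gamma a)$ once $g_n^{-1}$ agrees with $\gamma$ on the finite tuple $a$, and that convergence in $\mathcal{X}_{\mathcal{L}}$ is equivalent to eventual agreement on all such quantifier-free conditions. One small remark: your worry about function symbols is unnecessary, since $g_n^{-1}$ is an isomorphism $g_n B \to B$ and isomorphisms commute with term evaluation, so agreement of $g_n^{-1}$ with $\gamma$ on the tuple $a$ alone already yields $B \models \varphi(g_n^{-1}a) \iff B \models \varphi(\gamma a)$ for any quantifier-free $\varphi$.
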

\begin{proof}
Let $\varphi$ be a quantifier free formula and let $a$ be a tuple in $\omega$. By definition of the logic action,  $\varphi(a)$
holds in $g_n B$ for large $n$ if and only if $\varphi(g^{-1}_n a)$ holds in $B$ for large $n$.
Since $(g^{-1}_n)$ is converging to $\gamma$, the later is equivalent to $B \models \varphi(\gamma a)$.
The rest follows from the fact that $g_nB$ converges to $A$ if and only if for all $a$ and $\varphi$ as above and for all large enough $n$ we have that $A\models \varphi(a)\iff g_n B \models \varphi(a)$.
\end{proof}

When $\mathcal{K}$ is a class of $\mathcal{L}$-structures, we write $\mathrm{Mod}_{\omega}(\mathcal{K})$ for the subspace of $\mathcal{X}_{\mathcal{L}}$ consisting of structures in $\mathcal{K}$. Abusing notation, we also define \[[\varphi(a)] = \{A\in \mathrm{Mod}_{\omega}(\mathcal{K})\mid A\models \varphi(a)\}.\] 
When $\mathrm{Mod}_{\omega}(\mathcal{K})$ is an invariant subspace of $\mathcal{X}_{\mathcal{L}}$, the logic action descends to $\mathrm{Mod}_{\omega}(\mathcal{K})$. We are particularly interested in the case when $\mathrm{Mod}_{\omega}(\mathcal{K})$ is a $G_\delta$, and hence Polish, subspace of $\mathcal{X}_{\mathcal{L}}$. Note that $\mathrm{Mod}_{\omega}(\mathcal{K})$ is an invariant Borel subspace of $\mathcal{X}_{\mathcal{L}}$ whenever $\mathcal{K}$ is axiomatizable by a countable $\mathcal{L}_{\omega_1,\omega}$-theory, and it is often possible to check that $\mathrm{Mod}_{\omega}(\mathcal{K})$ is $G_\delta$ by looking at the form of this axiomatization.

Given a countable language $\mathcal{L}$, we denote by $\widehat{\mathcal{L}}$ the language $\mathcal{L}\cup \{P_i\mid i\in \omega\}$, where the $P_i$ are unary relation symbols which do not appear in $\mathcal{L}$. Given a class $\mathcal{K}$ of $\mathcal{L}$-structures, we denote by $\widehat{\mathcal{K}}$ the class of $\widehat{\mathcal{L}}$-structures whose reduct to $\mathcal{L}$ is in $\mathcal{K}$ and such that no two elements satisfy exactly the same set of predicates $P_i$, i.e. the satisfy the following axiom:
\[\forall x\forall y\, \left((x\neq y)\rightarrow \bigvee_{i\in \omega} (P_i(x)\not\leftrightarrow P_i(y))\right).\tag{A1}\] We call the structures in $\widehat{\mathcal{K}}$ \textbf{labeled} $\mathcal{K}$-structures.  

\begin{example}\label{Example:ctbl sets of reals}
If $\mathcal{L}_\emptyset$ is the empty language and $\mathcal{S}$ is the class of all sets then every structure in $\mathrm{Mod}_{\omega}(\widehat{\mathcal{S}})$ is essentially a sequence of distinct reals (elements of $2^\omega$). Up to isomorphism, such a structure is essentially a countable set of reals. The orbit equivalence relation $\simeq_{\mathrm{iso}}$ on $\mathrm{Mod}_{\omega}(\widehat{\mathcal{S}})$ is often denoted $=^{+}$. 

For any class of $\mathcal{L}$-structures $\mathcal{K}$, $\mathrm{Mod}_{\omega}(\widehat{\mathcal{K}})$ and $\mathrm{Mod}_{\omega}(\mathcal{K})\times \mathrm{Mod}_{\omega}(\widehat{\mathcal{S}})$ are isomorphic as Polish $S_\infty$-spaces. Note that $\mathrm{Mod}_{\omega}(\widehat{\mathcal{S}})$ is an invariant $G_\delta$ subspace of $\mathcal{X}_{\widehat{\mathcal{L}_{\emptyset}}}$. 
Hence, when $\mathrm{Mod}_{\omega}(\mathcal{K})$ is an invariant $G_\delta$ subspace of $\mathcal{X}_{\mathcal{L}}$, then $\mathrm{Mod}_{\omega}(\widehat{\mathcal{K}})$ is also an invariant $G_\delta$ subspace of $\mathcal{X}_{\widehat{\mathcal{L}}}= \mathcal{X}_{\mathcal{L}}\times \mathcal{X}_{\widehat{\mathcal{L}_\emptyset}}$.
\end{example}

Besides the fact that they give rise to natural equivalence relations which generalize $=^{+}$ it is convenient to consider labeled $\mathcal{K}$-structures for two more reasons. First, the logic action on $\mathrm{Mod}_{\omega}(\widehat{\mathcal{K}})$ automatically has meager orbits. This is crucial if one wants to apply Theorem \ref{T: main 2} on $\mathrm{Mod}_{\omega}(\widehat{\mathcal{K}})$ in any meaningful way since the existence of a comeager orbit in any $G$-space $X$ implies that $X$ is generically $0$-dimensional. Second, if $A$ and $B$ are labeled $\mathcal{K}$-structures, then there is at most one embedding $A\to B$. As a consequence, every diagram of labeled $\mathcal{K}$-structures and embeddings is automatically commutative. This labeling trick will allows us to work at the level of the embeddability relation (i.e., the information contained within the Becker digraph) without having to keep track of the composition relation between embeddings. We leave it to future work to develop a functorial version of the results in this paper, perhaps within the framework of Polish groupoids \cite{Lupini}. 

The main examples of logic actions that we will consider below consist of \emph{labeled BKL$_n$-structures}. Fix $n\geq 1$ and let $\mathcal{L}_n$ be the language which contains a collection $\{s_i\mid i\in \omega \}$ of $n$-ary function symbols, and a collection $\{R_j \mid j\in \omega\}$ of $n$-ary relation symbols. A {\bf BKL$_n$-structure} is any $\mathcal{L}_n$-structure $A$ which satisfies the following list of axioms:
\begin{enumerate}[(B1)]
\item The $n$-ary relations $(R_j)_{j\in \omega}$ partition $A^n$.
\item For any $n$-tuple $a = (a_0,\dots,a_{n-1})$ satisfying $R_j$, we have $s_i(a) = a_0$ for all functions $s_i$ with $i>j$. 
\item There is no substructure-independent set of size $(n+1)$: If $|B| = n+1$, then there is some $b\in B$ such that $b$ is in the substructure generated by $B\setminus \{b\}$. 
\item $A$ is locally finite: For any finite tuple $a$ in $A$, the substructure generated by $a$ is finite. 
\end{enumerate}

We will denote by $\mathcal{B}_n$ and by $\mathcal{B}_n^{\mathrm{fin}}$ the classes of all BKL$_n$-structures and all finite BKL$_n$-structures, respectively. Note that the empty structure is a BKL$_n$-structure.  

The BKL$_n$ structures were introduced by Baldwin, Koerwein, and Laskowski in~\cite{BKL}, based on a similar construction by Laskowski and Shelah in~\cite{LS}. In~\cite{BKL}, the classes $\mathcal{B}_n$ and $\mathcal{B}_n^{\mathrm{fin}}$ were called $\hat{\mathbf{K}}^{n-1}$ and $\mathbf{K}_0^{n-1}$, respectively. 

Our goal for the rest of this section is to prove the following theorem.
 
\begin{theorem}\label{thm:bklmain}
For every $n\geq 1$ the $S_{\infty}$-space $\mathrm{Mod}_\omega(\widehat{\mathcal{B}_{n}})$ is $(n-1)$-dimensional and locally generically $(n-1)$-dimensional.
\end{theorem}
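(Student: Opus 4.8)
The plan is to turn the statement into a combinatorial problem about embeddings between countable labeled $\mathcal{B}_n$-structures and then prove a matching upper and lower bound, the lower one uniformly across all invariant non-meager Baire sets. The first step is the reduction. By Proposition~\ref{Proposition Emb}, an arrow $[A]\to[B]$ in $\mathcal{B}(\mathrm{Mod}_\omega(\widehat{\mathcal{B}_n})/S_\infty)$ says precisely that $A$ embeds into $B$ as $\widehat{\mathcal{L}_n}$-structures, and because $\widehat{\mathcal{B}_n}$-structures are labeled such an embedding is unique when it exists, so any diagram of them commutes. Hence, for invariant $C\subseteq\mathrm{Mod}_\omega(\widehat{\mathcal{B}_n})$, a digraph embedding of the combinatorial $m$-cube into $\mathcal{B}(C/S_\infty)$ is exactly a family $(A_\sigma)_{\sigma\subseteq[m]}$ of countably infinite labeled $\mathcal{B}_n$-structures with orbits meeting $C$ and with $A_\sigma\hookrightarrow A_\tau$ iff $\sigma\subseteq\tau$ (so the $A_\sigma$ are pairwise non-isomorphic). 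Theorem~\ref{thm:bklmain} thus splits into two claims: (ii) no such family exists for $m=n$, and (i) for every invariant non-meager Baire set $C$ one exists for $m=n-1$.

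The upper bound (ii) I expect to be short. Suppose $(A_\sigma)_{\sigma\subseteq[n]}$ were such a family and set $M=A_{[n]}$. By uniqueness of embeddings each $A_\sigma$ has a canonical image $A_\sigma'\subseteq M$, and $A_\sigma'\subseteq A_\tau'$ whenever $\sigma\subseteq\tau$. For each $i\in[n]$, since $\{i\}\not\subseteq[n]\setminus\{i\}$ we have $A_{\{i\}}\not\hookrightarrow A_{[n]\setminus\{i\}}$, and therefore $A_{\{i\}}'\not\subseteq A_{[n]\setminus\{i\}}'$ (otherwise we would factor the canonical embedding $A_{\{i\}}\to M$ through $A_{[n]\setminus\{i\}}'\cong A_{[n]\setminus\{i\}}$); pick $b_i\in A_{\{i\}}'\setminus A_{[n]\setminus\{i\}}'$. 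By local finiteness (B4) the substructure $\langle b_0,\dots,b_{n-1}\rangle$ of $M$ is finite, so since $A_\emptyset'$ is infinite we may pick $c\in A_\emptyset'\setminus\langle b_0,\dots,b_{n-1}\rangle$. Now $\{b_0,\dots,b_{n-1},c\}$ is substructure-independent: the substructure generated by $\{b_j:j\neq i\}\cup\{c\}$ is contained in $A_{[n]\setminus\{i\}}'$ (which contains all those generators and is a substructure of $M$), hence omits $b_i$; and $\langle b_0,\dots,b_{n-1}\rangle$ omits $c$. This $(n+1)$-element substructure-independent set contradicts axiom (B3), so $\dim(\mathrm{Mod}_\omega(\widehat{\mathcal{B}_n}))\leq n-1$.

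The lower bound (i) is where the work lies. In a fixed ambient instance it is not hard: using the disjoint-amalgamation analysis of $\mathcal{B}_n^{\mathrm{fin}}$ from \cite{BKL} one builds, for $\sigma\subseteq[n-1]$, a countably infinite $\mathcal{B}_n$-structure $N_\sigma$ consisting of a single infinite ``base'' piece of independence dimension $1$ together with, for each $i\in\sigma$, an infinite piece glued in mutually independently over the base and carrying labels from a set $L_i$, the $L_i$ chosen pairwise disjoint and disjoint from the base labels; then $N_\sigma$ is a labeled substructure of $N_\tau$ when $\sigma\subseteq\tau$, while if $i\in\sigma\setminus\tau$ then $N_\sigma$ realizes labels from $L_i$ absent in $N_\tau$, and each $N_\sigma$ respects the independence bound (B3) precisely because $|\sigma|+1\leq n$. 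The main obstacle is to carry this out inside an arbitrary invariant non-meager Baire set $C$, so as to obtain the \emph{locally} generic conclusion. Here $C$ is comeager in some basic open set $[p]$ of $\mathrm{Mod}_\omega(\widehat{\mathcal{B}_n})$, hence comeager in $[p\cup q]$ for every finite $q\supseteq p$ compatible with being a $\widehat{\mathcal{B}_n}$-structure, so a sufficiently generic structure extending such a $p\cup q$ belongs to $C$; one must then design finite ``$\sigma$-markers'' $q=m_\sigma$ whose effect on the embeddability pattern survives the passage to a generic extension. This requires controlling which finite configurations a generic $\mathcal{B}_n$-structure over $p$ is forced to realize, and in what positions, and it is essentially here that the combinatorics specific to $\mathcal{B}_n$ — the failure of disjoint $n$-amalgamation isolated in \cite{BKL}, which limits how ``independent'' configurations can coexist — has to be exploited. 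Since the construction yields $2^{n-1}$ but not $2^n$ vertices, combining (i) and (ii) shows that $\mathrm{Mod}_\omega(\widehat{\mathcal{B}_n})$ is both $(n-1)$-dimensional and locally generically $(n-1)$-dimensional.
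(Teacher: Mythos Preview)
Your reduction step and upper bound are correct and essentially match the paper: the paper packages the upper bound as Lemma~\ref{lem:bklprops}(2), proved by exactly the independence argument you give.

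The gap is in the lower bound. Your plan is to pick, for each $\sigma\subseteq[n-1]$, a finite marker $m_\sigma$ extending a condition $p$ in which $C$ is comeager, and then let $A_\sigma$ be a ``sufficiently generic'' extension of $p\cup m_\sigma$ so that $A_\sigma\in C$. The problem is that choosing the $A_\sigma$ \emph{independently} generic gives you no control over their mutual embeddability. Your non-generic sketch controls non-embeddability by forcing the label set of $A_\sigma$ to meet a prescribed $L_i$ when $i\in\sigma$ and to miss it otherwise; but fixing even one specific label pattern is a meager condition, so this cannot be imposed on a structure that is generic enough to lie in an arbitrary comeager-in-$[p]$ set. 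Conversely, if you drop the label constraint and rely on $\mathcal{L}_n$-structure markers alone, two independently chosen generic $\mathcal{B}^*_n$-structures are both isomorphic to the Fra\"iss\'e limit and hence embed into one another. In short, the tension between ``each $A_\sigma$ is generic'' and ``the $A_\sigma$ satisfy a prescribed embeddability pattern'' is not resolved by finite markers on the individual structures.

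The paper's device for resolving this is to make a \emph{single} generic choice that simultaneously determines all $2^{n-1}$ structures together with their inclusion pattern. One expands $\mathcal{L}_n$ by unary predicates $D_\sigma$ ($\sigma\in\Delta^{n-2}$) and lets $\mathcal{D}^{n-1}_n$ be the class of structures in which the $D_\sigma$ carve out an irreducible $(n-1)$-cube in $\mathcal{B}^*_n$; this is shown to be a nonempty $G_\delta$. The ``read off the $\sigma$-piece'' map $\phi_\sigma\colon\mathrm{Mod}_\omega(\widehat{\mathcal{D}^{n-1}_n})\to\mathrm{Mod}_\omega(\widehat{\mathcal{B}^*_n})$ is continuous and open, hence category preserving, so $\bigcap_\sigma\phi_\sigma^{-1}(C)$ is comeager and any $M$ in it yields an irreducible cube with all vertices in $C$. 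Irreducibility is not something one has to \emph{force} against genericity; it is baked into the space. Finally, the paper does not attack the locally generic statement directly: it proves generic $(n-1)$-dimensionality as above and then upgrades via generic ergodicity (existence of a dense orbit), a step your proposal also omits.
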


We will conclude with the proof of Theorem \ref{thm:bklmain} at the end of the section after we collect the necessary lemmas. First we record the following corollary which is an immediate consequence of Theorem \ref{T: main 2} and Theorem \ref{thm:bklmain}.

\begin{corollary}\label{cor:main}
There is no $*$-reduction from $(\mathrm{Mod}_\omega(\widehat{\mathcal{B}_{m}}),\simeq_{\mathrm{iso}})$ to $(\mathrm{Mod}_\omega(\widehat{\mathcal{B}_{n}}),\simeq_{\mathrm{iso}})$, when $m\neq n$.
\end{corollary}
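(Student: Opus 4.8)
The plan is to obtain Corollary \ref{cor:main} directly from Theorem \ref{T: main 2} and the dimension computation in Theorem \ref{thm:bklmain}; the argument is short, and the only preparatory step is to pin down all three dimension invariants of the spaces involved. Write $X_k := \mathrm{Mod}_\omega(\widehat{\mathcal{B}_{k}})$ for $k\geq 1$, recalling that this is a Polish $S_\infty$-space on which the logic action induces exactly $\simeq_{\mathrm{iso}}$, so that $E^{S_\infty}_{X_k}$ is the relation $\simeq_{\mathrm{iso}}$. I would first record the following refinement of Theorem \ref{thm:bklmain}: for every $k\geq 1$ we have $\mathrm{dim}(X_k)=\mathrm{dim}^{*}_{\forall}(X_k)=\mathrm{dim}^{*}_{\exists}(X_k)=k-1$. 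Indeed, Theorem \ref{thm:bklmain} gives $\mathrm{dim}(X_k)=k-1$ and $\mathrm{dim}^{*}_{\exists}(X_k)=k-1$, and the general inequality $\mathrm{dim}(X_k)\geq \mathrm{dim}^{*}_{\forall}(X_k)\geq \mathrm{dim}^{*}_{\exists}(X_k)$ noted in Section \ref{S: Applications} then squeezes $\mathrm{dim}^{*}_{\forall}(X_k)$ to $k-1$ as well. In particular $X_k$ is generically $(k-1)$-dimensional.

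Then I would argue by cases on whether $m>n$ or $m<n$, which exhausts $m\neq n$. If $m>n$, then $X_m$ is generically $(m-1)$-dimensional while $\mathrm{dim}(X_n)=n-1<m-1$; Theorem \ref{T: main 2}(1), applied with $G=H=S_\infty$, $X=X_m$, $Y=X_n$, and dimension parameter $m-1$, then shows that $E^{S_\infty}_{X_m}$ does not $*$-reduce to $E^{S_\infty}_{X_n}$, i.e. there is no $*$-reduction from $(\mathrm{Mod}_\omega(\widehat{\mathcal{B}_{m}}),\simeq_{\mathrm{iso}})$ to $(\mathrm{Mod}_\omega(\widehat{\mathcal{B}_{n}}),\simeq_{\mathrm{iso}})$. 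If instead $m<n$, then $\mathrm{dim}(X_m)=m-1<n-1$ while $X_n$ is locally generically $(n-1)$-dimensional, and Theorem \ref{T: main 2}(2), with dimension parameter $n-1$, gives the same conclusion. This completes the argument.

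I do not expect any genuine obstacle at this stage: the substance lies entirely in Theorem \ref{thm:bklmain} (the dimension computation for the labeled BKL spaces) and in Theorem \ref{T: main 2}, which itself rests on Theorem \ref{T: main}. The one point to keep straight when writing this up is that the corollary needs \emph{two} applications of Theorem \ref{T: main 2}, one per possible direction of a hypothetical reduction: part (1), whose hypothesis is about the generic dimension of the \emph{source}, handles $m>n$, and part (2), whose hypothesis is about the locally generic dimension of the \emph{target}, handles $m<n$. It is exactly for this that recording above the coincidence of the plain, generic, and locally generic dimensions of each $X_k$ is useful: part (1) as stated requires the source to be generically $(m-1)$-dimensional, which we only know because $\mathrm{dim}(X_m)=m-1$ forces $\mathrm{dim}^{*}_{\forall}(X_m)=m-1$.
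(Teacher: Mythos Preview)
Your proposal is correct and follows exactly the approach the paper intends: the paper states that the corollary is ``an immediate consequence of Theorem \ref{T: main 2} and Theorem \ref{thm:bklmain}'' without spelling out the case split, and you have simply made that deduction explicit. Your observation that the squeeze $\mathrm{dim}(X_k)\geq \mathrm{dim}^{*}_{\forall}(X_k)\geq \mathrm{dim}^{*}_{\exists}(X_k)$ pins down the generic dimension (needed for part (1) of Theorem \ref{T: main 2}) is a detail the paper leaves implicit.
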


Recall from the beginning of this section the poset category $\Delta^{n-1}=(\mathcal{P}(\{0,\ldots,n-1\}),\subseteq)$ where $[n]=\{0,1,\ldots,n-1\}$ is the terminal object. Consider also the full subcategory $\partial\Delta^{n-1}$ of $\Delta^{n-1}$ whose set of objects is $\Delta^{n-1}\setminus \{[n]\}$. We view each class of structures $\mathcal{K}$ as a category whose arrows are embeddings. An {\bf $n$-cube in $\mathcal{K}$} is a functor $\mathcal{A}$ from $\Delta^{n-1}$ to $\mathcal{K}$, i.e., a pair $\mathcal{A} = \big((A_{\sigma})_{\sigma},(f^{\sigma}_{\tau})_{\sigma\subseteq\tau} \big)$, where $(A_{\sigma})_{\sigma}$ is a collection of structures from $\mathcal{K}$, indexed by elements $\sigma$ of $\Delta^{n-1}$; together with embeddings $f^{\sigma}_{\tau}\colon  A_{\sigma}\to A_{\tau}$, so that each $f^\sigma_\sigma$ is the identity map, and for every $\sigma\subseteq\tau\subseteq\rho$ we have that $f^{\tau}_{\rho} \circ f^{\sigma}_{\tau}=f^{\sigma}_{\rho}$. Similarly, a \textbf{partial $n$-cube} in $\mathcal{K}$ is a functor from $\partial\Delta^{n-1}$ to $\mathcal{K}$. 

An $n$-cube in $\mathcal{K}$ is \textbf{disjoint} if $f^\sigma_{\sigma\cup \tau}(A_\sigma) \cap f^\tau_{\sigma\cup \tau}(A_\tau) = f^{\sigma\cap \tau}_{\sigma\cup\tau}(A_{\sigma\cap\tau})$ for all $\sigma$ and $\tau$. Similarly, a partial $n$-cube is disjoint if the same condition holds whenever $\sigma\cup \tau\subsetneq [n]$. The class $\mathcal{K}$ has \textbf{disjoint $n$-amalgamation} if every disjoint partial $n$-cube can be extended to a disjoint $n$-cube.
Let $\mathcal{A} = \big((A_\sigma),(f^\sigma_\tau)\big)$ and $\mathcal{B} = \big((B_\sigma),(g^\sigma_\tau)\big)$ be disjoint $n$-cubes. A \textbf{disjoint embedding} $\mathcal{A}\to \mathcal{B}$ is a family of embeddings $h_\sigma\colon A_\sigma\to B_\sigma$ such that for all $\sigma\subseteq \tau$, $h_\tau\circ f^{\sigma}_\tau = g^{\sigma}_\tau \circ h_\sigma$, and for all $\sigma$ and $\tau$, \[(h_{\sigma\cup \tau}\circ f^\sigma_{\sigma\cup \tau})(A_\sigma) \cap g^\tau_{\sigma\cup \tau}(B_{\tau}) = (h_{\sigma\cup \tau}\circ f^{\sigma\cap \tau}_{\sigma\cup \tau})(A_{\sigma\cap \tau}).\]
An $n$-cube is {\bf reducible} if there are $\sigma,\tau\in\Delta^{n-1}$ with $\sigma\not\subseteq\tau$ and some embedding  $f\colon A_{\sigma}\to A_{\tau}$ so that $f^{\tau}_{\rho}\circ f=f^{\sigma}_{\rho}$ and $f\circ f^{\rho'}_{\sigma}=f^{\rho'}_{\tau}$, for all $\rho'\subseteq \sigma\cap \tau$ and $\sigma\cup\tau \subseteq \rho$. Equivalently, an $n$-cube is reducible if and only if there are $\sigma,\tau\in \Delta^{n-1}$ with $\sigma\not\subseteq\tau$ such that the image of $A_{\sigma} $ in $A_{[n]}$ is contained in the image of $A_{\tau}$ in $A_{[n]}$. 

\begin{lemma}\label{lem:bklprops}
\begin{enumerate}
\item The class $\mathcal{B}_n^{\mathrm{fin}}$ has disjoint $k$-amalgamation for all $1\leq k\leq n$.
\item If $\mathcal{A}$ is an $n$-cube in $\mathcal{B}_n$ such that $A_\emptyset$ is infinite, then $\mathcal{A}$ is reducible.
\end{enumerate}
\end{lemma}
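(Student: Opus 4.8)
The plan is to treat the two parts separately, as they rely on different features of the BKL$_n$ axioms. For part (1), I would unwind the definition of disjoint $k$-amalgamation: given a disjoint partial $k$-cube $\big((A_\sigma)_{\sigma\subsetneq[k]},(f^\sigma_\tau)\big)$ in $\mathcal{B}_n^{\mathrm{fin}}$, I need to produce a structure $A_{[k]}$ together with compatible embeddings $f^\sigma_{[k]}\colon A_\sigma\to A_{[k]}$ whose images intersect in the prescribed way. The natural candidate for the underlying set of $A_{[k]}$ is the colimit (pushout-style amalgam) of the partial cube along the diagram, i.e. the quotient of $\bigsqcup_\sigma A_\sigma$ by the equivalence relation generated by the $f^\sigma_\tau$; disjointness of the partial cube guarantees that two elements of $A_\sigma$ and $A_\tau$ are identified precisely when they come from a common element of $A_{\sigma\cap\tau}$, which is what makes the resulting $k$-cube disjoint. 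The work is then to define the $\mathcal{L}_n$-structure on this set so that it lies in $\mathcal{B}_n^{\mathrm{fin}}$ and restricts correctly to each $A_\sigma$. Here one uses $k\le n$ crucially: any $n$-tuple in the amalgam whose coordinates are ``spread out'' over the faces involves at most $k\le n$ of the pieces, and axiom (B3) (no substructure-independent set of size $n+1$) has enough room; I would define $s_i$ and $R_j$ on new tuples by declaring them to lie in the substructure generated by the union of two faces and computing there, checking (B1)--(B4) hold. This is essentially the content of \cite{BKL}; I would cite their amalgamation analysis and just indicate how the disjointness bookkeeping matches our definition.

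For part (2), suppose $\mathcal{A} = \big((A_\sigma),(f^\sigma_\tau)\big)$ is an $n$-cube in $\mathcal{B}_n$ with $A_\emptyset$ infinite. I want to find $\sigma\not\subseteq\tau$ in $\Delta^{n-1}$ so that the image of $A_\sigma$ in $A_{[n]}$ is contained in the image of $A_\tau$ in $A_{[n]}$. Identify each $A_\sigma$ with its image in $A_{[n]}$, so we have a commuting family of substructures of $A := A_{[n]}$ indexed by subsets of $[n]$, with $A_\emptyset$ infinite. The key combinatorial input is axiom (B3): in a BKL$_n$-structure no $(n+1)$-element set is substructure-independent. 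I would pick $n+1$ distinct elements $a_0,\dots,a_n$ of $A_\emptyset$ (possible since $A_\emptyset$ is infinite), and for each $i\in[n]$ use a ``spread-out'' witness inside $A_{\{i\}}$; the point is to arrange a configuration of $n+1$ elements, one associated to each of the $n$ singletons plus one extra from $A_\emptyset$, and then (B3) forces one of them into the substructure generated by the others. Because all these elements already lie in $A_\emptyset\subseteq A_{[n]}$ and the faces $A_\sigma$ are substructures, membership in a generated substructure is inherited: if $b$ lies in the substructure generated by $B\setminus\{b\}$ and $B\setminus\{b\}\subseteq A_\tau$, then $b\in A_\tau$. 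Chasing which face each witness element lives in, the dependency produced by (B3) will say precisely that some $A_\sigma$ sits inside some $A_\tau$ with $\sigma\not\subseteq\tau$, which is the definition of reducible.

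The main obstacle is the bookkeeping in part (2): making the application of (B3) actually yield a containment $A_\sigma\subseteq A_\tau$ with $\sigma\not\subseteq\tau$, rather than a trivial dependency among elements that already sit in a small common face. The right way to force a \emph{nontrivial} conclusion is to exploit that in a BKL$_n$-structure the substructure generated by a tuple is finite (axiom (B4)) while $A_\emptyset$ is infinite, so $A_\emptyset$ cannot be generated by any finite subset; combined with the finiteness of generated substructures one gets that the $n$-generated substructures are ``small,'' and then a counting/pigeonhole argument on the $n$ singleton faces, together with (B3) applied to a carefully chosen $(n+1)$-element set, pins down a pair $\sigma,\tau$. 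I would set this up by first reducing to the case where all $A_\sigma$ are \emph{generated} (replacing each $A_\sigma$ by the substructure of $A_{[n]}$ generated by $A_\emptyset$ together with appropriate finite tuples is not quite right, so instead I would argue directly), and I expect the cleanest route is a direct appeal to the analysis in \cite{BKL} of which $n$-cubes are realizable, citing their result that disjoint $n$-amalgamation fails for $\mathcal{B}_n$ exactly because of this reducibility phenomenon. I would present part (2) as the heart of the lemma and spend the bulk of the write-up there, keeping part (1) brief by reference to \cite{BKL}.
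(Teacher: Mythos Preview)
Your plan for part~(1) is broadly on track (take the set-theoretic colimit, then define the $\mathcal{L}_n$-structure on new tuples), and citing \cite{BKL} is reasonable. One remark: the mechanism you describe for interpreting $s_i$ and $R_j$ on a genuinely new $n$-tuple---``declaring them to lie in the substructure generated by the union of two faces''---does not make sense, since such a tuple need not sit in any pair of faces. The paper's device is different and worth knowing: enumerate $A_{[k]}$ as $c_0,\dots,c_N$, put the new tuple in $R_N$, and set $s_i$ on it equal to $c_i$ for $i\le N$. Thus any new $n$-tuple generates \emph{all} of $A_{[k]}$, and (B3) follows by a short pigeonhole: a putative independent $(n{+}1)$-set has $n{+}1$ sub-$n$-tuples but only $k\le n$ maximal faces, so either two of them land in a common face (contradicting (B3) there) or one is a ``new'' tuple and hence generates the missing element.

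Part~(2) has a genuine gap. Picking $n{+}1$ elements $a_0,\dots,a_n$ from $A_\emptyset$ cannot work: $A_\emptyset$ already sits inside every $A_\sigma$, so any (B3)-dependency among elements of $A_\emptyset$ is witnessed inside $A_\emptyset$ itself and says nothing about containments between faces. Your later sentence ``all these elements already lie in $A_\emptyset$'' makes this explicit and is exactly the problem. You also try to run the argument \emph{directly}, hoping that a (B3)-dependency will output a specific containment $A_\sigma\subseteq A_\tau$; it does not. The clean argument is by contradiction and uses the irreducibility hypothesis to \emph{choose} the witnesses. Assume $\mathcal{A}$ is irreducible; then for each $i<n$ the image of $A_{\{i\}}$ in $A_{[n]}$ is not contained in that of $A_{[n]\setminus\{i\}}$, so pick $x_i\in f^{\{i\}}_{[n]}(A_{\{i\}})\setminus f^{[n]\setminus\{i\}}_{[n]}(A_{[n]\setminus\{i\}})$. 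Now use (B4) and infiniteness of $A_\emptyset$ to choose $x_n\in f^\emptyset_{[n]}(A_\emptyset)\setminus\langle x_0,\dots,x_{n-1}\rangle$. The set $\{x_0,\dots,x_n\}$ is then substructure-independent: $x_n$ is independent of the rest by construction, and for $i<n$ the remaining elements all lie in $A_{[n]\setminus\{i\}}$ (since each $x_j$ with $j<n$, $j\ne i$ lies in $A_{\{j\}}\subseteq A_{[n]\setminus\{i\}}$, and $x_n\in A_\emptyset\subseteq A_{[n]\setminus\{i\}}$), hence so does the substructure they generate, while $x_i$ was chosen outside it. This contradicts (B3). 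The missing idea in your plan is precisely this use of irreducibility to locate $x_i$ outside the complementary maximal face; without it you cannot manufacture the contradiction.
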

\begin{proof}
\cite[Theorem 3.1.1]{BKL} essentially proves (1) but we repeat the short proof for completeness. Suppose $((A_\sigma)_{\sigma},(f^\sigma_\tau)_{\sigma\subseteq \tau})$ is a disjoint partial $k$-cube in $\mathcal{B}_n^{\mathrm{fin}}$. Let $A_{[k]}$ be the colimit of this diagram in the category of sets. Explicitly, $A_{[k]}$ is the disjoint union of the $A_\sigma$ for $\sigma\in \partial \Delta^{k-1}$, with $f^{\sigma}_\tau(a)$ and $f^{\sigma}_{\tau'}(a)$ identified for all $\sigma\subseteq \tau,\tau'$ and $a\in A_\sigma$. For all $\sigma\in \partial \Delta^{k-1}$, we define $f^\sigma_{[k]}$ to be the canonical inclusion $A_\sigma\to A_{[k]}$. Then the disjointness conditions are satisfied. 

It remains to make $A_{[k]}$ into a structure in $\mathcal{B}_n^{\mathrm{fin}}$ by defining the functions $s_i$ and relations $R_j$ on all $n$-tuples. So let $a = (a_0,\dots,a_{n-1})$ be an $n$-tuple in $A_{[k]}$. If $a$ is in the image of any $A_\sigma$ for $\sigma\in \partial \Delta^{k-1}$, there is a unique way to define the $s_i$ and $R_j$ so that $f^\sigma_{[k]}$ is an embedding. So we may assume that $a$ is not contained in the image of any $A_\sigma$. Enumerate $A_{[k]}$ as $c_0,\dots,c_N$, set $R_N(a)$, and define $s_i(a) = c_i$ for all $i\leq N$ and $s_i(a) = a_0$ for all $i>N$. Axioms (B1) and (B2) are satisfied by construction, and (B4) is trivially satisfied, since $A_{[k]}$ is finite. For (B3), suppose for contradiction that $B\subseteq A_{[k]}$ is substructure-independent, with $|B| = n+1$. Let $B = \{b_0,\dots,b_n\}$, and for all $0\leq i\leq n$, let $b^i = (b_0,\dots,b_{i-1},b_{i+1},\dots,b_n)$. If each $b^i$ is contained in the image of some $A_{[k]\setminus \{j\}}$, then since $k\leq n$, there are $i\neq i'$ and $j$ such that $b^i$ and $b^{i'}$ are both contained in the image of $A_{[k]\setminus \{j\}}$, and hence all of $B$ is contained in the image of $A_{[k]\setminus \{j\}}$. This contradicts the fact that $A_{[k]\setminus \{j\}}\in \mathcal{B}_n^{\mathrm{fin}}$. So there is some $i$ such that $b^i$ is an $n$-tuple not contained the image of any $A_\sigma$. Then $b_i = c_j$ for some $0\leq j\leq N$, and $s_j(b^i) = b_i$, so $B$ is not substructure-independent. 
 
For (2), assume towards contradiction that there is an irreducible $n$-cube $\mathcal{A}$ in $\mathcal{B}_n$ such that each structure $A_\sigma$ in $\mathcal{A}$ is infinite. Since $\mathcal{A}$ is not reducible, for each $0\leq i< n$, the image of $A_{\{i\}}$ in $A_{[n]}$ is not contained in the image of $A_{[n]\setminus \{i\}}$ in $A_{[n]}$. Pick $x_i$ in $f^{\{i\}}_{[n]}(A_{\{i\}})\setminus f^{[n]\setminus \{i\}}_{[n]}(A_{[n]\setminus \{i\}})$ for all $i$. Now by axiom (B4), $\langle x_0,\dots,x_{n-1}\rangle$ is finite, so we can pick some $x_n\in f^\emptyset_{[n]}(A_\emptyset)\setminus \langle x_0,\dots,x_{n-1}\rangle$.

Then the set $\{x_0,\dots,x_{n}\}$ is substructure-independent. We have already seen that $x_n\notin \langle x_0,\dots,x_{n-1}\rangle$. And for all $0\leq i < n$, $\langle x_0,\dots,x_{i-1},x_{i+1},\dots,x_n\rangle \subseteq f^{[n]\setminus \{i\}}
_{[n]}(A_{[n]\setminus \{i\}})$, so $x_i\notin \langle x_0,\dots,x_{i-1},x_{i+1},\dots,x_n\rangle$. This contradicts (B3).  
\end{proof}

The statement of Lemma~\ref{lem:bklprops}(2) also holds for $\widehat{\mathcal{B}_n}$, by taking reducts to $\mathcal{L}_n$.
As a consequence, there are no irreducible $n$-cubes in $\widehat{\mathcal{B}_n}$ all of whose structures lie in $\mathrm{Mod}_\omega(\widehat{\mathcal{B}_{n}})$.  Most of our remaining work is to prove that, in contrast, $\mathrm{Mod}_\omega(\widehat{\mathcal{B}_{n}})$ has many irreducible $k$-cubes whenever $k<n$. We begin by observing that $\mathcal{B}^{\mathrm{fin}}_n$ is a \Fraisse class when $n\geq 2$ (this was also used in~\cite{BKL}). Then we will find an irreducible $k$ cube in $\mathcal{B}_n$ for $1\leq k<n$ such that every structure in the cube is isomorphic to the \Fraisse limit of $\mathcal{B}^{\mathrm{fin}}_n$. Finally, we will use the existence of this $k$-cube and a Baire category argument to find $k$-cubes in any comeager subset of $\mathrm{Mod}_\omega(\widehat{\mathcal{B}_{n}})$.

\begin{lemma}\label{lem:bklspaces}
\begin{enumerate}
\item Let $A$ be a finite BKL$_n$-structure with domain $\{a_0,\dots,a_k\}$. Then there is a finite conjunction of atomic and negated atomic $\mathcal{L}_n$-sentences with parameters from $A$, denoted $\theta_A(a_0,\dots,a_{k})$, such that for any BKL$_n$-structure $B$, $B\models \theta_A(b_0,\dots,b_{k})$ if and only if the map $a_i\mapsto b_i$ is an embedding $A\to B$.
\item When $n\geq 2$, $\mathcal{B}_n^{\mathrm{fin}}$ is a \Fraisse class.
\end{enumerate}
\end{lemma}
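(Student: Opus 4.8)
The plan is to prove Lemma~\ref{lem:bklspaces} in two parts, handling the syntactic characterization of embeddings first and then deducing the \Fraisse properties from it together with Lemma~\ref{lem:bklprops}(1).

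\medskip

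\noindent\textbf{Part (1).} Given a finite BKL$_n$-structure $A$ with domain $\{a_0,\dots,a_k\}$, I would let $\theta_A(a_0,\dots,a_k)$ be the conjunction of: (i) for every $n$-tuple $\bar{a}$ from $A$, the unique relation $R_j$ with $A\models R_j(\bar a)$, together with the negations $\lnot R_{j'}(\bar a)$ for $j'\neq j$; (ii) for every $n$-tuple $\bar a$ and every $i\in\omega$, the atomic sentence $s_i(\bar a)=a_\ell$ recording the value $s_i^A(\bar a)=a_\ell$. The key point is that, even though there are infinitely many function symbols $s_i$, axiom (B2) forces $s_i^A(\bar a)=a_0$ for all $i$ greater than the index $j$ of the relation satisfied by $\bar a$; hence for each $\bar a$ only finitely many clauses of type (ii) are non-redundant, so $\theta_A$ is equivalent to a \emph{finite} conjunction. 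Since the atomic diagram of a structure in a functional-relational language determines embeddings, and since a map that preserves the quantifier-free data on all $n$-tuples of $A$ automatically preserves it on all tuples of any length (every basic relation/function is $n$-ary), one checks directly that for a BKL$_n$-structure $B$, $B\models\theta_A(b_0,\dots,b_k)$ holds exactly when $a_i\mapsto b_i$ is injective and preserves all $R_j$ and $s_i$, i.e.\ is an embedding. One must also verify injectivity is captured; this is automatic because distinct $a_i,a_j$ can be separated by the $\theta_A$-clauses (e.g.\ via some $s_\ell$-value or relation where they differ), or one simply adds the finitely many inequalities $a_i\neq a_j$ to $\theta_A$ for safety.

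\medskip

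\noindent\textbf{Part (2).} To show $\mathcal{B}_n^{\mathrm{fin}}$ is a \Fraisse class for $n\geq 2$, I need: it is closed under isomorphism and substructure, has countably many isomorphism types, has the hereditary property (HP), the joint embedding property (JEP), and the amalgamation property (AP). Closure under substructure is part of how the class is defined (axioms (B1)--(B4) are preserved under passing to substructures; (B3) and (B4) are clearly inherited, and (B1),(B2) hold in any substructure that is closed under the $s_i$). Countability up to isomorphism follows since a finite structure is determined by its finite atomic diagram, of which there are countably many. JEP is the special case of disjoint $2$-amalgamation over the empty structure, and AP follows from disjoint $2$-amalgamation over an arbitrary common substructure: given embeddings $C\to A$ and $C\to B$, form the disjoint partial $2$-cube with $A_\emptyset=C$, $A_{\{0\}}=A$, $A_{\{1\}}=B$ (replacing $A,B$ by isomorphic copies intersecting exactly in the image of $C$), and apply Lemma~\ref{lem:bklprops}(1) with $k=2\leq n$ to obtain the disjoint $2$-cube completing the amalgam. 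The one subtlety is that disjoint amalgamation a priori only amalgamates disjoint partial cubes, so I must first arrange that the two given embeddings of $C$ have images meeting exactly in $C$; this is the standard manoeuvre of replacing $A$ and $B$ by isomorphic copies with disjoint domains outside $C$, which is harmless since the class is closed under isomorphism.

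\medskip

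\noindent\textbf{Main obstacle.} The genuinely delicate point is Part (1): making sure that $\theta_A$ can be taken \emph{finite} despite the infinite signature. The whole argument hinges on exploiting axiom (B2) to see that, on each $n$-tuple, all but finitely many function values are pinned down (equal to the first coordinate) by the choice of relation $R_j$, and that this uniformity is itself expressible. I would spell this out carefully. Everything else---the verification that $\theta_A$ characterizes embeddings, and the derivation of HP/JEP/AP from Lemma~\ref{lem:bklprops}(1)---is routine once Part (1) is in place.
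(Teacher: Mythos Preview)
Your approach is essentially the same as the paper's: define $\theta_A$ as the inequalities $a_i\neq a_j$, the positive relations $R_{j_c}(c)$ for each $n$-tuple $c$, and the finitely many function values $s_i(c)=d^i_c$ for $i\leq j_c$, then invoke (B1)--(B2) to see this determines the full diagram; for Part~(2), the paper likewise gets AP from disjoint $2$-amalgamation (Lemma~\ref{lem:bklprops}(1)), JEP from AP over the empty structure, and countability from Part~(1). One small correction: injectivity is \emph{not} automatic from the relation/function clauses alone (there are BKL$_n$-structures in which two elements satisfy the same atomic type), so you should simply include the inequalities $a_i\neq a_j$ as the paper does, rather than treat this as a safety measure.
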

\begin{proof}

For any $n$-tuple $c$ from $A$, let $j_c$ be the unique natural number such that $A\models R_{j_c}(c)$, and let $d_c^i = s_i(c)$ for all $i\in \omega$. Then let $\theta_A(a_0,\dots,a_k)$ be the conjunction of the following formulas:
\begin{itemize}
\item $a_i\neq a_j$, for all $1\leq i < j \leq k$. 
\item $R_{j_c}(c)$, for each $n$-tuple $c$ from $A$.
\item $s_i(c) = d^i_c$, for each $n$-tuple $c$ from $A$ and each $i\leq j_c$. 
\end{itemize}

This finite piece of the diagram of $A$ is sufficient to determine the rest of its diagram. Indeed, by axioms (B1) and (B2), $R_{j_c}(c)$ implies $\lnot R_{j}(c)$ for all $j\neq j_c$ and $s_i(c) = c_0$ for all $i>j_c$.  This establishes (1). 

For (2), it is clear from the definition that $\mathcal{B}_n^{\mathrm{fin}}$ has the hereditary property, and the amalgamation property is Lemma~\ref{lem:bklprops}(1) in the case $k=2$. The amalgamation property implies the joint embedding property, since $\mathcal{B}_n^{\mathrm{fin}}$ includes the empty structure. Finally, the fact that $\mathcal{B}_n^{\mathrm{fin}}$ is countable up to isomorphism follows from part (1), since there are only countably many $\mathcal{L}_n$ formulas. 
\end{proof}

We denote by $\mathcal{B}^*_{n}$ the class of all structures in $\mathcal{B}_{n}$ which are isomorphic to the \Fraisse limit of $\mathcal{B}_{n}^{\mathrm{fin}}$. These are exactly the countably infinite BKL$_n$-structures which satisfy the following \textbf{extension axiom} for every pair of  finite BKL$_n$-structures $A\subseteq B$ with domains $\{a_0,\dots,a_k\}$ and $\{a_0,\dots,a_k,a_{k+1},\dots, a_\ell\}$, respectively: \[\forall x_0,\dots,x_k\, (\theta_A(x_0,\dots,x_k)\rightarrow \exists x_{k+1},\dots,x_\ell \,\theta_B(x_0,\dots,x_\ell)).\] 
 
\begin{lemma}\label{L: in-out} Suppose $1\leq k<n$. Let $\mathcal{A} = \big((A_{\sigma}),(f^{\sigma}_{\tau}) \big)$ be a disjoint $k$-cube in $\mathcal{B}_{n}^{\mathrm{fin}}$. For every $\rho\in \Delta^{k-1}$ and every embedding $h\colon A_{\rho} \to B$ in $\mathcal{B}_{n}^{\mathrm{fin}}$, there is a disjoint $k$-cube $\mathcal{B} = \big((B_{\sigma}),(g^{\sigma}_{\tau}) \big)$ in $\mathcal{B}_{n}^{\mathrm{fin}}$ and a disjoint embedding $(h_\sigma)\colon \mathcal{A}\to \mathcal{B}$ such that $B_\rho = B$ and $h_\rho = h$, and further, for all $\tau$ such that $\rho\not\subseteq \tau$, we have $B_\tau = A_\tau$, $h_\tau\colon A_\tau\to B_\tau$ is the identity map, and $g^\sigma_\tau = f^\sigma_\tau$ for all $\sigma\subseteq \tau$.
\end{lemma}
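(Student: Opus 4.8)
The statement asks: given a disjoint $k$-cube $\mathcal{A}$ in $\mathcal{B}_n^{\mathrm{fin}}$, a distinguished vertex $\rho$, and an embedding $h\colon A_\rho\to B$, we want to "push $h$ forward" along the cube — replace $A_\sigma$ by some $B_\sigma$ for all $\sigma\supseteq\rho$ (the ``upper set'' of $\rho$ in $\Delta^{k-1}$), while keeping everything below and to the side of $\rho$ untouched, and do this disjointly. The plan is to build the $B_\sigma$ by a finite induction up the poset $\{\sigma : \rho\subseteq\sigma\}$, using disjoint amalgamation in $\mathcal{B}_n^{\mathrm{fin}}$ (Lemma~\ref{lem:bklprops}(1), available since $k\le n$) to construct each new vertex as a pushout over the already-constructed lower vertices.

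\textbf{Step 1: Set up the target objects on the non-upper part.} For every $\tau$ with $\rho\not\subseteq\tau$, set $B_\tau = A_\tau$, let $h_\tau$ be the identity, and for $\sigma\subseteq\tau$ with $\sigma$ also not above $\rho$, set $g^\sigma_\tau = f^\sigma_\tau$. This handles the entire down-set complementary to the principal up-filter of $\rho$. The remaining work is to define $B_\sigma$ for $\sigma\in U:=\{\sigma\in\Delta^{k-1} : \rho\subseteq\sigma\}$, together with the maps $h_\sigma$ and the structure maps $g$, consistently.

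\textbf{Step 2: Inductive construction over $U$.} Order the finitely many $\sigma\in U$ by increasing cardinality. For $\sigma=\rho$, set $B_\rho=B$, $h_\rho=h$. For $\sigma\in U$ with $\sigma\supsetneq\rho$, suppose $B_{\sigma'}$ and all the maps into and below it have been defined for all $\sigma'\subsetneq\sigma$ (these split into the ``old'' vertices $A_{\sigma'}$ with $\rho\not\subseteq\sigma'$, and the already-built $B_{\sigma'}$ with $\rho\subseteq\sigma'$). The vertex $A_\sigma$ together with the $A_{\sigma'}$ for $\sigma'\subsetneq\sigma$ forms a disjoint partial $|\sigma|$-cube with a top; pushing the isomorphisms $h_{\sigma'}$ forward, and using that all these partial cubes sit inside $\mathcal{B}_n^{\mathrm{fin}}$, we can apply disjoint $|\sigma|$-amalgamation (valid since $|\sigma|\le k\le n$) to the partial cube whose lower faces are the $B_{\sigma'}$ and whose ``glue'' is prescribed by $A_\sigma$ via the $h_{\sigma'}$. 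Concretely, I would take $B_\sigma$ to be the pushout of the $B_{\sigma'}$'s amalgamated along the image of $A_\sigma$ — the amalgamation lemma produces a structure in $\mathcal{B}_n^{\mathrm{fin}}$ with the required disjointness, and it yields the inclusion maps $g^{\sigma'}_\sigma$ and an embedding $h_\sigma\colon A_\sigma\to B_\sigma$ making the square $h_\sigma\circ f^{\sigma'}_\sigma = g^{\sigma'}_\sigma\circ h_{\sigma'}$ commute for each $\sigma'\subsetneq\sigma$, $\sigma'\in U$, and agreeing with the identity on the old faces.

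\textbf{Step 3: Check functoriality and disjointness of the whole cube.} Since we are in $\widehat{\mathcal{B}_n}$-style categories where finite structures embed in at most one way once the labeled part is fixed — or more directly, since each $g^{\sigma'}_\sigma$ is literally a canonical inclusion into a pushout — the composition identities $g^\sigma_\tau\circ g^{\sigma'}_\sigma = g^{\sigma'}_\tau$ hold automatically, as does $h_\tau\circ f^\sigma_\tau = g^\sigma_\tau\circ h_\sigma$ across all $\sigma\subseteq\tau$ (splitting into cases by which of $\sigma,\tau$ lie in $U$). The disjoint-embedding condition $(h_{\sigma\cup\tau}\circ f^\sigma_{\sigma\cup\tau})(A_\sigma)\cap g^\tau_{\sigma\cup\tau}(B_\tau) = (h_{\sigma\cup\tau}\circ f^{\sigma\cap\tau}_{\sigma\cup\tau})(A_{\sigma\cap\tau})$ and the internal disjointness of $\mathcal{B}$ both follow from the disjointness guaranteed at each amalgamation step, again by the pushout description: an element of $B_{\sigma\cup\tau}$ lying in two faces must, by construction of the colimit, come from their intersection face.

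\textbf{Main obstacle.} The genuine content is Step 2: verifying that the data we feed into disjoint $k$-amalgamation really is a \emph{disjoint partial cube} — i.e. that after transporting along the $h_{\sigma'}$ the lower faces $B_{\sigma'}$ still intersect pairwise exactly in the expected sub-face. This is where the induction hypothesis (disjointness of $\mathcal{B}$ restricted to proper subsets of $\sigma$) and the fact that $h$ is an \emph{embedding} (hence injective, so pulls disjointness back and forth) are both used, and it requires some care to phrase the pushout so that the old faces $A_\tau$ (with $\rho\not\subseteq\tau$) are literally preserved rather than merely preserved up to isomorphism. Everything else is bookkeeping that is forced by the colimit/pushout construction, so I would organize the proof around ``$B_\sigma := $ the structure obtained by applying disjoint $|\sigma|$-amalgamation to the partial cube $(B_{\sigma'})_{\sigma'\subsetneq\sigma}$ with gluing prescribed by $A_\sigma$'' and then verify the four families of identities uniformly.
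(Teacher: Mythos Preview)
Your overall architecture matches the paper's: freeze $B_\tau = A_\tau$ on $\{\tau:\rho\not\subseteq\tau\}$, then build the remaining $B_\sigma$ by induction over $\{\sigma:\rho\subseteq\sigma\}$ via disjoint amalgamation. The gap is in how you invoke amalgamation at the inductive step.

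You propose to obtain $B_\sigma$ by ``applying disjoint $|\sigma|$-amalgamation to the partial cube $(B_{\sigma'})_{\sigma'\subsetneq\sigma}$ with gluing prescribed by $A_\sigma$'', and claim this ``yields \ldots\ an embedding $h_\sigma\colon A_\sigma\to B_\sigma$''. But amalgamating the partial cube $(B_{\sigma'})_{\sigma'\subsetneq\sigma}$ only produces a structure receiving maps from the $B_{\sigma'}$'s --- and hence from the $A_{\sigma'}$'s for $\sigma'\subsetneq\sigma$ via $h_{\sigma'}$ --- with no reason at all to receive a map from $A_\sigma$, which will typically have elements outside every $f^{\sigma'}_\sigma(A_{\sigma'})$. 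The phrase ``glue prescribed by $A_\sigma$'' does not repair this: $A_\sigma$ does not map into the $B_{\sigma'}$'s (the maps go the wrong direction), so it cannot serve as glue for that diagram. As stated, your construction loses the top structure $A_\sigma$ at every inductive stage.

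The paper's solution is to place $A_\tau$ itself as a face of the amalgamation problem by adding one extra coordinate. For $\tau\supseteq\rho$ with $\tau\setminus\rho=\{a_0,\ldots,a_{m-1}\}$, one defines a disjoint partial $(m{+}1)$-cube $F\colon\partial\Delta^m\to\mathcal{B}_n^{\mathrm{fin}}$ in which the last coordinate distinguishes the $A$-side from the $B$-side: $F(\sigma)=A_{\rho\cup\{a_i:i\in\sigma\}}$ if $m\notin\sigma$, and $F(\sigma)=B_{\rho\cup\{a_i:i\in\sigma,\,i\neq m\}}$ if $m\in\sigma$. Thus $F([m])=A_\tau$ and $F(\{m\})=B_\rho$ both sit as faces of the same partial cube. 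Disjoint $(m{+}1)$-amalgamation (available since $m+1\le k+1\le n$) then delivers $B_\tau$ together with $h_\tau$ (the image of the arrow $[m]\subseteq[m+1]$) and the maps $g^{\sigma'}_\tau$ simultaneously. This extra-coordinate reformulation is the ingredient missing from your Step~2.

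A smaller issue: in Step~3 you invoke uniqueness of embeddings in ``$\widehat{\mathcal{B}_n}$-style categories'', but the lemma is stated for unlabeled $\mathcal{B}_n^{\mathrm{fin}}$, where embeddings are not unique; functoriality and the disjointness conditions must be checked directly through the induction, as the paper does.
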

\begin{proof}
We define $\mathcal{B}$ and $(h_{\sigma})$ in stages, ensuring that the parts of $\mathcal{B}$ and $(h_\sigma)$ that we have defined so far are functorial and satisfy the relevant disjointness conditions. Namely, for each $\tau$, we must check:
\begin{enumerate}[(a)]
\item For all $\sigma\subseteq \sigma'\subseteq \tau$, $g^\tau_\tau$ is the identity, and $g^{\sigma'}_\tau\circ g^\sigma_{\sigma'} = g^\sigma_\tau$. 
\item For all $\sigma\subseteq \tau$, $h_\tau\circ f^\sigma_\tau = g^\sigma_\tau\circ h_\sigma$. 
\item For all $\sigma\cup \sigma' = \tau$, $g^\sigma_{\tau}(B_\sigma) \cap g^{\sigma'}_{\tau}(B_{\sigma'}) = g^{\sigma\cap \sigma'}_{\tau}(B_{\sigma\cap \sigma'})$.  
\item For all $\sigma\cup \sigma' = \tau$, $(h_{\tau}\circ f^\sigma_{\tau})(A_\sigma) \cap g^{\sigma'}_{\tau}(B_{\sigma'}) = (h_{\tau}\circ f^{\sigma\cap \sigma'}_{\tau})(A_{\sigma\cap \sigma'})$.  
\end{enumerate}

For $\tau$ such that $\rho\not\subseteq\tau$, set $B_{\tau}=A_{\tau}$, let $h_{\tau}\colon A_{\tau}\to B_{\tau}$ be the identity map, and let $g^{\sigma}_{\tau}=f^{\sigma}_{\tau}$ for $\sigma\subseteq \tau$, as required in the statement of the lemma. Conditions (a), (b), (c), and (d) are satisfied for all such $\tau$; (a) by functoriality of $\mathcal{A}$, (b) trivially, and (c) and (d) because $\mathcal{A}$ is disjoint. For $\tau$ with $\rho\subseteq\tau$, we proceed by induction on the size $m=| \tau \setminus \rho|$. 

For $m=0$, i.e., for $\tau=\rho$, let $h_{\tau}\colon A_{\tau}\to B_{\tau}$ be $h\colon A_{\rho}\to B$. Let $g^\tau_\tau$ be the identity and $g^{\sigma}_{\tau}= h \circ f^{\sigma}_{\tau}$ for $\sigma\subsetneq \tau$. Condition (a) is trivially satisfied when $\sigma' = \tau$, and letting $\sigma\subseteq \sigma'\subsetneq \tau$, we have $\rho\not\subseteq \sigma'$, so $g^{\sigma'}_\tau\circ g^\sigma_{\sigma'} = h\circ f^{\sigma'}_\tau \circ f^{\sigma}_{\sigma'} = h\circ f^{\sigma}_\tau = g^\sigma_\tau$. Condition (b) is trivially satisfied when $\sigma = \tau$, so it suffices to consider $\sigma\subsetneq \tau$. Then $\rho\not\subseteq\sigma$, and we have $g^{\sigma}_{\tau}\circ h_{\sigma}=h_{\tau} \circ  f^{\sigma}_{\tau}$, since $h_\sigma$ is the identity map. 

Condition (c) is trivial when $\sigma = \tau$ or $\sigma' = \tau$, so we may assume $\rho\not\subseteq\sigma$ and $\rho\not\subseteq \sigma'$. Then   \[g^\sigma_\tau(B_\sigma) \cap g^{\sigma'}_\tau(B_{\sigma'}) = h(f^\sigma_\tau(A_\sigma)\cap f^{\sigma'}_\tau(A_{\sigma'})) = h(f^{\sigma\cap \sigma'}_\tau(A_{\sigma\cap \sigma'})) = g^{\sigma\cap \sigma'}_\tau(B_{\sigma\cap \sigma'}).\] 

Similarly, condition (d) is trivial when $\sigma' = \tau$, so we may assume $\rho\not\subseteq \sigma'$. Then  \[(h_{\tau}\circ f^\sigma_{\tau})(A_\sigma) \cap g^{\sigma'}_{\tau}(B_{\sigma'}) = h(f^\sigma_\tau(A_\sigma)\cap f^{\sigma'}_\tau(A_{\sigma'})) = (h_{\tau}\circ f^{\sigma\cap \sigma'}_{\tau})(A_{\sigma\cap \sigma'}).\]

Assume now that we have defined $B_{\tau}$, $g^\sigma_\tau$, and $h_{\tau}$ for all $\tau$ with $|\tau \setminus \rho|<m$ and all $\sigma\subseteq \tau$, such that conditions (a), (b), (c), and (d) are satisfied. We consider $\tau$ with $|\tau \setminus \rho|=m$. Let $\{a_0,\ldots,a_{m-1}\}$ be an enumeration of the set  $\tau \setminus \rho$. 

First we define a disjoint partial $(m+1)$-cube, i.e.\ a functor $F\colon \partial\Delta^{m}\to \mathcal{B}^{\mathrm{fin}}_n$. 
Let
\[
F(\sigma) = \begin{cases} 
B_{\rho\cup \{a_i\mid i\in \sigma,i\neq m\}} & m\in \sigma\\
A_{\rho\cup \{a_i\mid i\in \sigma\}} & m\notin \sigma.
\end{cases}
\]
The image of the relation $\sigma\subseteq \sigma'$  under $F$ is defined as the unique choice amongst $f^{\sigma}_{\sigma'}$, $g^{\sigma}_{\sigma'}$, or $h_{\sigma'} \circ f^{\sigma}_{\sigma'}$ which makes sense. Functoriality of $F$ follows from conditions (a) and (b) and functorality of $\mathcal{A}$, and disjointness of the $(m+1)$-cube follows from conditions (c) and (d) and disjointness of $\mathcal{A}$. 

Since $m+1\leq k+1\leq n$, $\mathcal{B}^{\mathrm{fin}}_n$ has disjoint $(m+1)$-amalgamation by Lemma~\ref{lem:bklprops}, so we can extend $F$ to a disjoint $(m+1)$-cube $G\colon \Delta^m\to \mathcal{B}^{\mathrm{fin}}_n$. Set $B_{\tau}=G([m+1])$, and let $h_{\tau}$ be the image of the relation $[m]\subseteq[m+1]$ under $G$. For any subset $\sigma$ of $\tau$, we define $g^{\sigma}_{\tau}$ to be the appropriate embedding in the image of $G$, if $\rho\subseteq \sigma$, or $h_{\tau}\circ f^{\sigma}_{\tau}$ if not. Now conditions (a) and (b) follow from functoriality of $G$ and induction, and conditions (c) and (d) follow from disjointness of $G$ and induction. 
\end{proof}

\begin{lemma}\label{L: in-out2}
For all $1\leq k < n$, there is an irreducible $k$-cube in $\mathcal{B}^*_n$. 
\end{lemma}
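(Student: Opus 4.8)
The plan is to construct, by a \Fraisse-style chain built on top of Lemma~\ref{L: in-out}, a disjoint $k$-cube every vertex of which is isomorphic to the \Fraisse limit of $\mathcal{B}_n^{\mathrm{fin}}$ (which exists because $n\geq 2$, by Lemma~\ref{lem:bklspaces}(2)), starting from a finite irreducible $k$-cube and arranging that irreducibility is preserved at each step.

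First I would produce a finite, disjoint, irreducible seed $\mathcal{A}_0$ in $\mathcal{B}_n^{\mathrm{fin}}$. Let $A^0_{[k]}$ be the $\mathcal{L}_n$-structure on the set $\{e_0,\dots,e_{k-1}\}$ in which every $n$-tuple $a=(a_0,\dots,a_{n-1})$ satisfies $R_0$ and has $s_i(a)=a_0$ for all $i$. Axioms (B1) and (B2) hold by construction, (B4) because the structure is finite, and (B3) vacuously because $|A^0_{[k]}|=k<n+1$ leaves no subset of size $n+1$. For each $\sigma\subseteq[k]$ the set $\{e_i\mid i\in\sigma\}$ is closed under all the $s_i$, hence is the universe of a substructure $A^0_\sigma\in\mathcal{B}_n^{\mathrm{fin}}$, and taking the $f^\sigma_\tau$ to be inclusions gives a functor $\mathcal{A}_0\colon\Delta^{k-1}\to\mathcal{B}_n^{\mathrm{fin}}$. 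It is disjoint because $\{e_i\mid i\in\sigma\}\cap\{e_i\mid i\in\tau\}=\{e_i\mid i\in\sigma\cap\tau\}$, and it is irreducible because whenever $\sigma\not\subseteq\tau$ and $j\in\sigma\setminus\tau$ we have $e_j\in f^\sigma_{[k]}(A^0_\sigma)$ while $e_j\notin f^\tau_{[k]}(A^0_\tau)$.

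Next I would run the standard \Fraisse bookkeeping to build a chain $\mathcal{A}_0\to\mathcal{A}_1\to\cdots$ of disjoint $k$-cubes in $\mathcal{B}_n^{\mathrm{fin}}$ linked by disjoint embeddings. At stage $t$, given a task consisting of a vertex $\sigma$, finite BKL$_n$-structures $A\subseteq B$, and an embedding $\iota\colon A\to A^t_\sigma$, I would form the amalgam $B^*$ of $A^t_\sigma$ and $B$ over $A$ (using amalgamation in $\mathcal{B}_n^{\mathrm{fin}}$, Lemma~\ref{lem:bklprops}(1) with $k=2$) and then apply Lemma~\ref{L: in-out} with $\rho=\sigma$ and $h\colon A^t_\sigma\to B^*$ the canonical embedding; this returns $\mathcal{A}_{t+1}$ with $A^{t+1}_\sigma=B^*$ together with a disjoint embedding $\mathcal{A}_t\to\mathcal{A}_{t+1}$ realizing the task. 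Organising the bookkeeping so that every embedding of a finite BKL$_n$-structure into any colimit vertex $A^\infty_\sigma$ (the colimit of the $A^t_\sigma$) is eventually handled, the colimit cube $\mathcal{A}_\infty$ is a disjoint $k$-cube whose vertices are countably infinite BKL$_n$-structures satisfying the extension axiom, hence members of $\mathcal{B}^*_n$.

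The remaining and genuinely load-bearing point is that $\mathcal{A}_\infty$ is irreducible. The key observation, which is an immediate instantiation of the disjointness clause in the definition of a disjoint embedding $(h_\tau)\colon\mathcal{A}\to\mathcal{B}$ at the index pair $[k]$ and $\sigma$, is that $h_{[k]}(A_{[k]})\cap g^\sigma_{[k]}(B_\sigma)=h_{[k]}(f^\sigma_{[k]}(A_\sigma))$; since $h_{[k]}$ is injective, this carries the invariant ``$A_{[k]}$ contains distinct elements $e_0,\dots,e_{k-1}$ with $e_i\in f^{\{i\}}_{[k]}(A_{\{i\}})$ for every $i$, and $e_j\notin f^\sigma_{[k]}(A_\sigma)$ whenever $j\notin\sigma$'' from $\mathcal{A}_t$ to $\mathcal{A}_{t+1}$ (replacing each $e_i$ by $h_{[k]}(e_i)$), and hence, the invariant being finitary and the chain maps injective, to $\mathcal{A}_\infty$. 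Exactly as for the seed, this invariant forces $\mathcal{A}_\infty$ to be irreducible, completing the proof. Beyond this identity the only real obstacle is organizational: laying out the bookkeeping so that Lemma~\ref{L: in-out} is applicable at every stage while keeping the marked points under control.
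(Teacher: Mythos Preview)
Your proof is correct and follows the same overall architecture as the paper's: build a chain of disjoint $k$-cubes in $\mathcal{B}_n^{\mathrm{fin}}$ via Lemma~\ref{L: in-out} and \Fraisse bookkeeping, take colimits, and verify irreducibility at the end. The difference lies in how irreducibility is established. The paper starts from the \emph{empty} $k$-cube; to show irreducibility of the colimit for a given pair $\sigma\not\subseteq\tau$, it locates a stage at which a proper extension at $\sigma$ produces a fresh element, and then uses the specific feature of Lemma~\ref{L: in-out} that $h_\tau$ is the identity when $\rho\not\subseteq\tau$, together with the disjoint-embedding condition, to propagate this element's absence from the $\tau$-component through all later stages. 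You instead plant an explicit irreducible seed $\mathcal{A}_0$ with witnesses $e_0,\dots,e_{k-1}$ and carry the single invariant ``$e_j\notin f^\sigma_{[k]}(A_\sigma)$ whenever $j\notin\sigma$'' through every disjoint embedding, using only the instance $h_{[k]}(A_{[k]})\cap g^\sigma_{[k]}(B_\sigma)=h_{[k]}(f^\sigma_{[k]}(A_\sigma))$ of the disjointness clause. Your route is slightly cleaner in that it does not need to invoke the identity-map conclusion of Lemma~\ref{L: in-out}, while the paper's route avoids constructing the seed; both rely on the same disjoint-embedding machinery.
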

\begin{proof} 
We build a sequence of disjoint $k$-cubes in $\mathcal{B}_n^{\mathrm{fin}}$, $\mathcal{A}^i = \big((A^i_\sigma),(f^{\sigma,i}_\tau)\big)$ for $i\in \omega$, together with disjoint embeddings $(h^i_\sigma) \colon \mathcal{A}^i\to \mathcal{A}^{i+1}$ for all $i$. Then for each $\sigma$, we will let $C_\sigma$ be the directed colimit of the $A^i_\sigma$, and for each $\sigma\subseteq \tau$, let $g^\sigma_\tau$ be the natural map $C_\sigma\to C_\tau$ induced by the maps $f^{\sigma,i}_\tau$ for all $i$. This defines a $k$-cube $\mathcal{C} = \big((C_\sigma),(g^\sigma_\tau)\big)$. The idea is to carry out the \Fraisse construction simultaneously for each $\sigma$, so that each $C_\sigma$ in $\mathcal{B}_n^*$. 

We begin with the empty $k$-cube $\mathcal{A}^0$, where $A_\sigma^0$ is the empty structure for all $\sigma$. Suppose we have defined $\mathcal{A}^i$, and we are given some $\rho$, some substructure $B\subseteq A^i_\rho$, and some embedding $e\colon B\to B'$ in $\mathcal{B}^{\mathrm{fin}}_n$. By disjoint $2$-amalgamation, there is a structure $A_\rho^{i+1}\in \mathcal{B}_n^{\mathrm{fin}}$ and embeddings $h^i_\rho\colon A_\rho^i\to A_\rho^{i+1}$ and $e'\colon B'\to A_\rho^{i+1}$ such that $h^i_\rho\res B = e'\circ e$ and $h^i_\rho(A_\rho)\cap e'(B) = h^{i}_{\rho}(B)$. By Lemma~\ref{L: in-out}, we can extend $A_\sigma^{i+1}$ to a disjoint $k$-cube $\mathcal{A}^{i+1}$ and extend $h^i_\rho$ to a disjoint embedding $(h^i_\sigma)\colon \mathcal{A}^i\to \mathcal{A}^{i+1}$. Taking care in the course of the construction to handle every embedding from a substructure of some $A^i_\rho$ in this way, we may ensure that each $C_\sigma$ is in $\mathcal{B}_n^*$.

It remains to show that the $k$-cube $\mathcal{C}$ is irreducible. Suppose $\sigma\not\subseteq \tau$. We will show that $g^\sigma_{[k]}(C_\sigma)\not\subseteq g^\tau_{[k]}(C_\tau)$. Pick some $i$ such that the extension $h_\sigma^i\colon A_\sigma^i\to A_\sigma^{i+1}$ is determined by some proper extension of a substructure of $A_\sigma^i$. Since $A_\sigma^{i+1}$ is defined by disjoint $2$-amalgamation, there is some $x_{i+1}\in  A_\sigma^{i+1}\setminus h_\sigma^i(A_\sigma^i)$, and further, by Lemma~\ref{L: in-out}, we know that $h^{i}_\tau\colon A_\tau^{i}\to A_\tau^{i+1}$ is the identity map, since $\sigma\not\subseteq \tau$. 

Let $y_{i+1} = f^{i+1,\sigma}_{[k]}(x_{i+1})\in A^{i+1}_{[k]}$. Then $y_{i+1}\notin f^{i+1,\tau}_{[k]}(A^{i+1}_\tau)$. Indeed, if it were, then there would be some $z\in A^i_\tau$ such that $f^{i+1,\tau}_{[k]}(h^i_\tau(z)) = y_{i+1}$, since $h^i_\tau$ is the identity map. But then since $(h^i_\sigma)$ is a disjoint embedding, $x_{i+1}$ and $z$ would both be in the image of some element in $A^i_{\sigma\cap \tau}$, and in particular $x_{i+1}$ would be in the image of some element of $A^i_\sigma$, contradicting the choice of $x_{i+1}$. 

Now for $j>i+1$, define $x_{j+1} = h^j_\sigma(x_j)\in A^{j+1}_\sigma$ and $y_{j+1} = h^j_{[k]}(y_j)\in A^{j+1}_{[k]}$ by induction. Let $x$ and $y$ be the common images of the $x_j$ and $y_j$ in $C_\sigma$ and $C_{[k]}$. Then $y = g^\sigma_{[k]}(x)$. We will show by induction on $j\geq i+1$ that $y_j\notin f^{j,\tau}_{[k]}(A^j_\tau)$, from which it follows that $y\in g^\sigma_{[k]}(C_\sigma)\setminus  g^\tau_{[k]}(C_\tau)$. 

We have already established the base case, when $j = i+1$. For the inductive step, if $y_{j+1} = h^j_{[k]}(y_j) \in f^{j+1,\tau}_{[k]}(A^{j+1}_\tau)$, then since $(h_\sigma)$ is a disjoint embedding, then $y_j\in f^{j,\tau}_{[k]}(A^j_\tau)$, contradicting the inductive hypothesis. 
\end{proof}

It will be convenient to encode an entire $k$-cube in a single structure. Fix $1\leq k<n$, and consider the language \[\mathcal{L}^k_{n}=\mathcal{L}_{n}\cup\{D_{\sigma}\mid \sigma\in \Delta^{k-1}\},\] where 
the $D_{\sigma}$ are unary relation symbols. Let $\mathcal{D}^k_{n}$ be the class of all $\mathcal{L}^k_{n}$-structures $M$ such that:
\begin{enumerate}
\item $D^M_{[k]} = M$.
\item Each $D^M_{\sigma}$ is an $\mathcal{L}_n$-substructure of $M\res \mathcal{L}_n$.
\item $D^M_\sigma\subseteq D^M_\tau$ when $\sigma\subseteq \tau$.
\item $(D^M_{\sigma})_\sigma$ together with the inclusion maps is an irreducible $k$-cube in $\mathcal{B}^*_{n}$.
\end{enumerate}

\begin{lemma}
\begin{enumerate}
\item $\mathrm{Mod}_\omega(\mathcal{B}_{n})$ is an invariant $G_\delta$ subspace of $\mathcal{X}_{\mathcal{L}_n}$.
\item A basis for the topology on $\mathrm{Mod}_\omega(\mathcal{B}_{n})$ is given by the collection of sets $[\theta_A(a_0,\dots,a_{k})]$, where $A$ is a finite BKL$_n$-structure with domain $\{a_0,\dots,a_{k}\}\subseteq \omega$. 
\item $\mathrm{Mod}_\omega(\mathcal{B}^{*}_{n})$ is an invariant \emph{dense} $G_{\delta}$ subspace of $\mathrm{Mod}_\omega(\mathcal{B}_{n})$.
\item $\mathrm{Mod}_\omega(\widehat{\mathcal{B}^{*}_{n}})$ is an invariant \emph{dense} $G_{\delta}$ subspace of $\mathrm{Mod}_\omega(\widehat{\mathcal{B}_{n}})$.
\item For $1\leq k<n$, $\mathrm{Mod}_\omega(\mathcal{D}^k_{n})$ is an invariant \emph{nonempty} $G_\delta$ subspace of $\mathcal{X}_{\mathcal{L}^k_{n}}$.
\item For $1\leq k<n$, $\mathrm{Mod}_\omega(\widehat{\mathcal{D}^k_{n}})$ is an invariant \emph{nonempty} $G_\delta$ subspace of $\mathcal{X}_{\widehat{\mathcal{L}^k_{n}}}$.
\end{enumerate}
\end{lemma}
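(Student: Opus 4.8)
The plan is to read each of the six clauses off its defining axiomatization, to deduce the hatted clauses (4) and (6) from the unhatted ones through the isomorphism $\mathrm{Mod}_\omega(\widehat{\mathcal{K}})\cong \mathrm{Mod}_\omega(\mathcal{K})\times\mathrm{Mod}_\omega(\widehat{\mathcal{S}})$ of Example~\ref{Example:ctbl sets of reals}, and to reduce clauses (3) and (5) to clause (1) together with Lemma~\ref{lem:bklspaces} and the existence statement of Lemma~\ref{L: in-out2}. Every axiom involved is isomorphism-invariant, so the $S_\infty$-invariance of all these sets is automatic and I will not mention it again. Throughout I would use that a basic clopen set of a space $\mathcal{X}_{\mathcal{L}}$ fixes only finitely many atomic facts; in particular each $[R_j(a)]$, each $[s_i(a)=b]$, and each $[D_\sigma(x)]$ is clopen, so any finite boolean combination of such sets is clopen, while countable unions are open and countable intersections are $G_\delta$.

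Clause (1) is the heart of the lemma, so I would treat it first, axiom by axiom. Axiom (B1) requires that for each $n$-tuple $a$ exactly one $R_j(a)$ hold; ``at least one'' is open and ``at most one'' is closed (hence $G_\delta$ in a Polish space), so intersecting over the countably many $a$ shows (B1) defines a $G_\delta$ set. Axiom (B2) is a countable intersection of clopen sets, hence closed. For (B3), the relation ``$b$ lies in the substructure generated by a finite set $S$'' is the countable union, over $\mathcal{L}_n$-terms $t$ and tuples from $S$, of the clopen sets where $t$ evaluates to $b$, hence open, so (B3) defines a $G_\delta$ set as well. The one clause requiring real work --- and the main obstacle --- is local finiteness (B4), which reads a priori like an $F_\sigma$ statement. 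The point is that \emph{once we restrict to the $G_\delta$ set $Z$ defined by (B1) and (B2)}, the generated substructure is built in visibly finite layers: writing $S_0^M$ for the set of entries of $a$ and $S_{m+1}^M = S_m^M\cup\{\, s_i^M(\bar b)\mid \bar b\in (S_m^M)^n,\ i\in\omega\,\}$, axiom (B2) makes each set $\{\, s_i^M(\bar b)\mid i\in\omega\,\}$ finite (only the $s_i$ with $i\le j$, where $R_j(\bar b)$ holds, can differ from $b_0$), so every $S_m^M$ is finite, and moreover $M\mapsto S_m^M$ is locally constant on $Z$ --- a neighbourhood of $M$ pinning down the finitely many relevant $R_j(\bar b)$- and $s_i(\bar b)$-coordinates already determines $S_m$. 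Now $\langle a\rangle^M$ is finite iff the chain $(S_m^M)_m$ stabilises iff some $S_m^M$ is closed under all the $s_i$; and for a fixed finite $T$, ``$T$ is closed under all $s_i$'' is an open condition on $Z$, being $\bigcap_{\bar b\in T^n}\bigcup_j\,[\, R_j(\bar b)\ \wedge\ \forall i\le j\ (s_i(\bar b)\in T)\,]$. Hence the set where $\langle a\rangle^M$ is finite is open in $Z$, so (B4) is $G_\delta$ in $Z$, and $\mathrm{Mod}_\omega(\mathcal{B}_n)$, being the intersection of $Z$ with the $G_\delta$ sets defined by (B3) and (B4), is $G_\delta$ in $\mathcal{X}_{\mathcal{L}_n}$.

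For clause (2): the formula $\theta_A$ is a finite conjunction of atomic and negated atomic conditions, so $[\theta_A(a_0,\dots,a_k)]$ is clopen in $\mathrm{Mod}_\omega(\mathcal{B}_n)$ --- and nonempty, as $A$ embeds into the \Fraisse limit once $n\ge 2$. Conversely, given $M\in\mathrm{Mod}_\omega(\mathcal{B}_n)$ and a basic open neighbourhood $U$ of $M$, I would let $F\subseteq\omega$ collect the finitely many parameters appearing in the conditions that define $U$, set $B = \langle F\rangle^M$ --- finite by (B4), and a finite $\mathrm{BKL}_n$-structure since substructures of $\mathrm{BKL}_n$-structures are again $\mathrm{BKL}_n$-structures --- and enumerate its domain as $b_0,\dots,b_k$. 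Then $M\models\theta_B(b_0,\dots,b_k)$, and by Lemma~\ref{lem:bklspaces}(1) every $M'\models\theta_B(b_0,\dots,b_k)$ realises the full atomic diagram of $B$, hence agrees with $M$ on the atomic conditions defining $U$; thus $M\in[\theta_B(b_0,\dots,b_k)]\subseteq U$, so the sets $[\theta_B(\cdot)]$ form a basis.

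Clauses (3)--(6) are then routine. For (3), with $n\ge 2$ so that the \Fraisse limit exists, a member of $\mathrm{Mod}_\omega(\mathcal{B}_n)$ lies in $\mathcal{B}^*_n$ exactly when it satisfies each of the countably many extension axioms displayed before Lemma~\ref{L: in-out}; such an axiom defines the set $\bigcap_{\bar x}\big(\,[\lnot\theta_A(\bar x)]\cup\bigcup_{\bar y}[\theta_B(\bar x\bar y)]\,\big)$, which is $G_\delta$, and is dense because every nonempty $[\theta_C(\bar c)]$ contains a copy of the \Fraisse limit on domain $\omega$ carrying $\bar c$ to the prescribed elements (using Lemma~\ref{lem:bklspaces}(2)); the Baire category theorem then yields that $\mathrm{Mod}_\omega(\mathcal{B}^*_n)$ is dense $G_\delta$. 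Clause (4) is immediate, since by Example~\ref{Example:ctbl sets of reals} the space $\mathrm{Mod}_\omega(\widehat{\mathcal{B}^*_n})\cong\mathrm{Mod}_\omega(\mathcal{B}^*_n)\times\mathrm{Mod}_\omega(\widehat{\mathcal{S}})$ sits inside $\mathrm{Mod}_\omega(\widehat{\mathcal{B}_n})\cong\mathrm{Mod}_\omega(\mathcal{B}_n)\times\mathrm{Mod}_\omega(\widehat{\mathcal{S}})$ as the product of a dense $G_\delta$ set with the whole space. For (5), conditions (1)--(3) in the definition of $\mathcal{D}^k_n$ become countable intersections of clopen sets (for (1) and (3)) or of $G_\delta$ sets (for the closure conditions in (2)); that each $D_\sigma^M$ lies in $\mathcal{B}^*_n$ amounts to ``$M\res\mathcal{L}_n\in\mathcal{B}^*_n$'' --- a $G_\delta$ condition by clauses (1) and (3) and continuity of the reduct map --- together with, for each $\sigma$, infinitude of $D_\sigma^M$ and the extension axioms relativised to $D_\sigma$, the latter being $G_\delta$ since $[\bar x\subseteq D_\sigma]$ and the sets $[\theta_A(\bar x)]$, $[\theta_B(\bar x\bar y)]$ are clopen; and ``the cube is irreducible'' is the \emph{open} condition $\bigcap_{\sigma\not\subseteq\tau}\bigcup_x\big(\,[D_\sigma(x)]\cap[\lnot D_\tau(x)]\,\big)$. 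Nonemptiness of $\mathrm{Mod}_\omega(\mathcal{D}^k_n)$ is exactly where Lemma~\ref{L: in-out2} enters: given an irreducible $k$-cube $\big((C_\sigma),(g^\sigma_\tau)\big)$ in $\mathcal{B}^*_n$, one realises $C_{[k]}$ as a structure on domain $\omega$ and puts $D_\sigma^M = g^\sigma_{[k]}(C_\sigma)$, whereupon conditions (1)--(4) hold by construction. Finally, clause (6) follows from (5) exactly as (4) followed from (3), via Example~\ref{Example:ctbl sets of reals}.
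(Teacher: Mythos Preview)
Your proof is correct and follows essentially the same approach as the paper's: both proceed axiom by axiom for (1), argue (2) via local finiteness and the formulas $\theta_A$, handle (3) and (5) through the extension axioms and Lemma~\ref{L: in-out2}, and deduce (4) and (6) from the unhatted versions. The only notable difference is your treatment of axiom (B4): the paper expresses local finiteness directly as the $G_\delta$ sentence $\forall \bar x\,\bigvee_A \exists \bar y\,\theta_A(\bar x,\bar y)$ (with $A$ ranging over $\mathcal{B}_n^{\mathrm{fin}}$), whereas you instead show that, once (B1) and (B2) hold, the generated substructure $\langle a\rangle$ grows in locally constant finite layers and ``$\langle a\rangle$ is finite'' becomes an open condition---a slightly more hands-on but equally valid route.
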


\begin{proof}
For (1), we give an axiomatization of $\mathcal{B}_n$ in $\mathcal{L}_{\omega_1,\omega}$:
\begin{itemize}
\item $\forall x_0,\dots,x_{n-1} \bigvee_{j\geq 0} R_j(x_0,\dots,x_{n-1})$;
\item $\forall x_0,\dots,x_{n-1} \bigwedge_{0\leq i<j} \lnot (R_i(x_0,\dots,x_{n-1})\land R_j(x_0,\dots, x_{n-1})$;
\item $\forall x_0,\ldots,x_{n-1} \bigwedge_{0\leq i< j} (R_i(x_0,\dots,x_{n-1})\rightarrow s_j(x_0,\ldots,x_{n-1})=x_0)$;
\item $\forall x_0,\ldots,x_{n} \bigvee_{0\leq k\leq n} \bigvee_{t(x_1,\dots,x_{k-1},x_{k+1},\dots,x_n)} t(x_1,\dots,x_{k-1},x_{k+1},\dots,x_n) = x_k$,\\ 
where $t$ ranges over all terms in the variables $x_0,\dots,x_n$, with $x_k$ omitted.
\item $\forall x_0,\dots,x_{k}(\bigwedge_{1\leq i<j\leq k} x_i\neq x_j \rightarrow \bigvee_{A = \{a_0,\dots,a_\ell\}} \exists x_{k+1},\dots,x_\ell\, \theta_A(x_0,\dots,x_\ell))$,\\
where $A$ ranges over all isomorphism types of structures in $\mathcal{B}_n^{\mathrm{fin}}$ with a fixed enumeration $a_0,\dots,a_\ell$, where $k\leq \ell$. Note that this is only a countable disjunction, since each such $A$ corresponds to the finitary $\mathcal{L}_n$-formula $\theta_A$. 
\end{itemize}
From the form of the axiomatization it follows that $\mathrm{Mod}_\omega(\mathcal{B}_{n})$ is $G_\delta$ in $\mathcal{X}_{\mathcal{L}_n}$.

For (2), let $U$ be a nonempty open subset of $\mathrm{Mod}_\omega(\mathcal{B}_{n})$. We may assume that $U = [\varphi(a)]$, where $\varphi(x)$ is a finite conjunction of atomic and negated atomic $\mathcal{L}_n$-formulas. Let $A\in [\varphi(a)]$. By axiom (B4), $A$ is locally finite. Let $B$ be the substructure of $A$ generated by the tuple $a$, and fix an enumeration of $B$ by a tuple $b$. Then $A\in [\theta_B(b)] \subseteq [\varphi(a)]$. 

For (3), the fact that $\mathrm{Mod}_\omega(\mathcal{B}^{*}_{n})$ is an invariant $G_{\delta}$ subspace of $\mathrm{Mod}_\omega(\mathcal{B}_{n})$ is clear from the axiomatization by extension axioms. Density follows from (2), since every finite BKL$_n$-structure with domain a subset of $\omega$ is a substructure of a structure in $\mathrm{Mod}_\omega(\mathcal{B}^{*}_{n})$. Then (4) follows immediately. 

For (5), note that conditions (1)-(3) in the definition of $\mathcal{D}^k_n$ are closed, the condition that the $k$-cube is irreducible is open, and the condition that each $D^M_\sigma$ is in $\mathcal{B}^*_n$ is $G_\delta$, by relativizing the extension axioms for $\mathcal{B}^*_n$ to $D_\sigma$. To see that it is nonempty, let $((A_\sigma),(f^\sigma_\tau))$ be the irreducible $k$-cube in $\mathcal{B}^*_n$ constructed in Lemma~\ref{L: in-out2}. We may assume that $M = A_{[k]}$ has domain $\omega$ and define the relations $D^M_\sigma$ as $f^\sigma_{[k]}(A_\sigma)$. Then (6) follows immediately.
\end{proof}

If $M\in \mathrm{Mod}_\omega(\widehat{\mathcal{D}^k_{n}})$, then for all $\sigma\in \Delta^{k-1}$, we view $D^M_\sigma$ as a $\widehat{\mathcal{L}_n}$-substructure of $M\res \widehat{\mathcal{L}_n}$. Notice that the domain of $D^M_{\sigma}$ is always an infinite subset of $\omega$, since $M\res \widehat{\mathcal{L}_n}$ is in $\mathrm{Mod}_\omega(\widehat{\mathcal{B}^{*}_{n}})$. Let $e^M_\sigma\colon \omega \to \omega$ enumerate $D^M_\sigma$ in increasing order. Then there is a unique structure $N\in \mathrm{Mod}_\omega(\widehat{\mathcal{B}^{*}_{n}})$ such that $e^M_\sigma\colon N\to M$ defines an $\widehat{L_n}$-isomorphism between $N$ and  $D^M_\sigma$. The assignment $M\mapsto N$ defines a map:
\[\phi_{\sigma}\colon\mathrm{Mod}_\omega(\widehat{\mathcal{D}^k_{n}})\to\mathrm{Mod}_\omega(\widehat{\mathcal{B}^{*}_{n}}).\]

\begin{lemma}\label{L: cont open}
For all $1\leq k<n$, and for every $\sigma\in \Delta^{k-1}$, the map $\phi_{\sigma}\colon\mathrm{Mod}_\omega(\widehat{\mathcal{D}^k_{n}})\to\mathrm{Mod}_\omega(\widehat{\mathcal{B}^{*}_{n}})$ is continuous and open.
\end{lemma}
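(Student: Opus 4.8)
To prove that $\phi_\sigma$ is continuous and open, I would work directly with the bases of the two Polish spaces. A basic open set in $\mathrm{Mod}_\omega(\widehat{\mathcal{B}^*_n})$ is of the form $[\widehat{\theta}_A(b_0,\dots,b_\ell)]$, where $A$ is a finite $\widehat{\mathcal{L}_n}$-structure with domain $\{b_0,\dots,b_\ell\}\subseteq\omega$ (using Lemma~\ref{lem:bklspaces}(1) together with the conditions on the unary predicates $P_i$ coming from (A1)), and a basic open set in $\mathrm{Mod}_\omega(\widehat{\mathcal{D}^k_n})$ is cut out by finitely many conditions: an $\widehat{\mathcal{L}_n}$-diagram condition on a finite tuple from $\omega$, together with a specification, for each element of that tuple, of which of the predicates $D_\tau$ it satisfies. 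The key observation is that the value $\phi_\sigma(M)=N$ is obtained by \emph{transporting the structure of $D^M_\sigma$ back along the increasing enumeration} $e^M_\sigma\colon\omega\to\omega$ of the domain of $D^M_\sigma$. So $N\models\widehat{\theta}_A(b_0,\dots,b_\ell)$ exactly when $M\models\widehat{\theta}_A(e^M_\sigma(b_0),\dots,e^M_\sigma(b_\ell))$; the whole difficulty is that $e^M_\sigma$ is itself a function of $M$.

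\textbf{Continuity.} First I would check continuity. Fix $M_0$ with $\phi_\sigma(M_0)=N_0\in[\widehat\theta_A(b_0,\dots,b_\ell)]$. Let $j_0<j_1<\dots<j_\ell$ be the first $\ell+1$ elements of $D^{M_0}_\sigma$, so $e^{M_0}_\sigma(b_p)=j_{b_p}$. Then I would take the basic neighborhood $U$ of $M_0$ determined by: (i) for each $m\le j_\ell$, whether or not $D_\sigma(m)$ holds (this pins down the first $\ell+1$ elements of $D_\sigma$ to be exactly $j_0,\dots,j_\ell$, because we record both membership and non-membership up to $j_\ell$), and (ii) the $\widehat{\mathcal{L}_n}$-diagram of the tuple $(j_0,\dots,j_\ell)$ enough to decide $\widehat\theta_A(j_{b_0},\dots,j_{b_\ell})$. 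For every $M\in U$ we get $e^M_\sigma(b_p)=j_{b_p}$ and $M\models\widehat\theta_A(j_{b_0},\dots,j_{b_\ell})$, hence $\phi_\sigma(M)\in[\widehat\theta_A(b_0,\dots,b_\ell)]$. This shows $\phi_\sigma^{-1}$ of a basic open set is open, so $\phi_\sigma$ is continuous.

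\textbf{Openness.} For openness, let $U\subseteq\mathrm{Mod}_\omega(\widehat{\mathcal{D}^k_n})$ be a basic open set and $M_0\in U$ with $\phi_\sigma(M_0)=N_0$; I must find a neighborhood of $N_0$ inside $\phi_\sigma(U)$. Shrinking $U$, I may assume it specifies the $\widehat{\mathcal{L}_n}$-diagram of a finite tuple $\bar c$ from $\omega$ and, for each $c\in\bar c$ and each $\tau$, whether $D_\tau(c)$ holds. Let $\bar c_\sigma$ be the subtuple lying in $D_\sigma$, with increasing enumeration giving indices $\bar b$ in the domain of $N_0=\phi_\sigma(M_0)$; let $B$ be the finite $\widehat{\mathcal{L}_n}$-substructure of $N_0$ generated by $\bar b$ (finite by local finiteness (B4)), enumerated as $\bar b'$. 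I claim $[\widehat\theta_B(\bar b')]\subseteq\phi_\sigma(U)$: given $N\in[\widehat\theta_B(\bar b')]$, I must build $M\in U$ with $\phi_\sigma(M)=N$. The idea is to realize $N$ as $D_\sigma$ inside a structure $M$ of $\mathcal{D}^k_n$ whose other coordinates $D_\tau$ ($\tau\ne\sigma$, and in particular $[k]$) agree with $M_0$ on the relevant finite parts. This is exactly where \textbf{Lemma~\ref{L: in-out}} is used: starting from the finite disjoint $k$-cube inside $M_0$ spanned by $\bar c$, I replace the $\sigma$-component by the (finite part of the) larger structure coming from $N$ via an embedding $A_\sigma\to B_N$, extend to a disjoint $k$-cube in $\mathcal{B}^{\mathrm{fin}}_n$ keeping all $\tau\not\supseteq\sigma$ fixed, then run a $\mathcal{B}^*_n$ \Fraisse-type back-and-forth (as in Lemma~\ref{L: in-out2}), making sure each component becomes a copy of the \Fraisse limit and, simultaneously, that the $\sigma$-component becomes isomorphic to $N$ (not merely to \emph{some} element of $\mathcal{B}^*_n$). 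One also handles the labels $P_i$ freely, using (A1): since only finitely many $P_i$ are constrained, the rest can be chosen to separate all points across the whole structure. The resulting $M$ lies in $\mathrm{Mod}_\omega(\widehat{\mathcal{D}^k_n})$, belongs to $U$ by construction of the finite data, and satisfies $\phi_\sigma(M)=N$ because $D^M_\sigma$ is literally a copy of $N$ via the chosen isomorphism.

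\textbf{Main obstacle.} The genuinely delicate point is the openness direction: it is not enough to produce \emph{some} $M\in\mathrm{Mod}_\omega(\widehat{\mathcal{D}^k_n})$ with $D^M_\sigma\cong N$; one needs the \emph{whole} $k$-cube $(D^M_\tau)_\tau$ to remain irreducible and each $D^M_\tau$ to stay in $\mathcal{B}^*_n$, while the finite part agrees with $M_0$ so that $M\in U$. Packaging the simultaneous \Fraisse construction across all $2^k$ coordinates, with one distinguished coordinate forced to equal a prescribed structure $N$ and the cube kept irreducible, is the technical heart; but all the machinery is already in place in Lemmas~\ref{lem:bklprops}, \ref{L: in-out}, and \ref{L: in-out2}, so the argument is a careful bookkeeping exercise built on those lemmas rather than a new idea.
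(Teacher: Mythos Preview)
Your continuity argument is correct and essentially identical to the paper's: pin down enough of $D_\sigma$ on an initial segment to freeze $e^M_\sigma$ on the relevant indices, then transport the basic open condition.

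For openness, however, you take a genuinely different route from the paper, and your outline has a real gap. The paper does \emph{not} build a new $k$-cube; instead it exploits ultrahomogeneity of the Fra\"iss\'e limit. Given $N'$ in the candidate neighborhood $V$ and a witness $M\in U$ with $\phi_\sigma(M)=N$, both $N'\res\mathcal{L}_n$ and $D^M_\sigma\res\mathcal{L}_n$ are copies of the Fra\"iss\'e limit agreeing on a finite substructure $B$, so there is an $\mathcal{L}_n$-isomorphism $f\colon N'\to D^M_\sigma$ extending $e^M_\sigma\res B$. Extending $e^M_\sigma\circ f^{-1}$ by the identity off $D^M_\sigma$ gives $g\in S_\infty$; then $M_0=gM$ still lies in $U$ (since $g$ fixes the finite tuple $a$ pointwise), and after freely relabeling the $P_i$ one obtains $M'\in U$ with $\phi_\sigma(M')=N'$. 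No new cube is constructed: one simply re-enumerates the existing $M$.

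Your approach, by contrast, proposes to rebuild the cube via Lemma~\ref{L: in-out} and a Fra\"iss\'e construction. The problem is the phrase ``the finite disjoint $k$-cube inside $M_0$ spanned by $\bar c$'': nothing in the definition of $\mathcal{D}^k_n$ forces $D^{M_0}_\sigma\cap D^{M_0}_\tau=D^{M_0}_{\sigma\cap\tau}$, so the finite cube of substructures you want to start from need not be disjoint, and Lemma~\ref{L: in-out} (which is stated and proved only for disjoint cubes) does not apply. A basic open $U$ may well record membership data like $D_\sigma(c)\wedge D_\tau(c)\wedge\neg D_{\sigma\cap\tau}(c)$, and any $M'\in U$ must reproduce this non-disjointness. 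Secondly, you need $\phi_\sigma(M)=N$ on the nose, not merely $D^M_\sigma\cong N$; this means controlling the increasing enumeration of $D^M_\sigma$ inside $\omega$ simultaneously with the placement of $\bar c$ prescribed by $U$, which your Fra\"iss\'e back-and-forth sketch does not address. Both issues are likely repairable with substantial extra bookkeeping, but the paper's ultrahomogeneity argument sidesteps them entirely in a few lines.
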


\begin{proof}
For continuity, let $U$ be an open set in $\mathrm{Mod}_\omega(\widehat{\mathcal{B}^{*}_{n}})$. We may assume $U$ has the form $[\psi(a)]$, where $\psi$ is a finite conjunction of atomic and negated atomic $\widehat{\mathcal{L}_n}$-formulas and $a = (a_1,\dots,a_t)$ is a tuple from $\omega$ with $a_1<\dots<a_t$. Let $M\in \phi_\sigma^{-1}(U)$. Then $M\models \psi(e^M_\sigma(a))$. Let $\chi$ be the conjunction of all formulas of the form $D_\sigma(b)$ and $\lnot D_\sigma(b)$ which are satisfied in $M$, for $b\leq e^M_\sigma(a_t)$, and let $V = [\psi(e^M_\sigma(a))\land \chi]$. Then $V$ is an open set in $\mathrm{Mod}_\omega(\widehat{\mathcal{D}^k_{n}})$ and $M\in V\subseteq \phi_\sigma^{-1}(U)$. Indeed, if $M'\in V$, then $e^{M'}_\sigma$ agrees with $e^{M}_\sigma$ on the initial segment of $\omega$ up to $a_t$, so $M'\models \psi(e^{M'}_\sigma(a))$, and $\phi_\sigma(M')\in U$.

For openness, let $U$ be an open set in $\mathrm{Mod}_\omega(\widehat{\mathcal{D}^k_{n}})$. We may assume $U$ has the form $[\psi(a)]$, where $\psi$ is a finite conjunction of atomic and negated atomic $\widehat{\mathcal{L}^k_n}$-formulas and $a = (a_1,\dots,a_t)$ is a tuple from $\omega$ with $a_1<\dots<a_t$. Note also that $\psi$ mentions only finitely many of the labeling predicates, $P_{i_1},\dots,P_{i_\ell}$. Let $N\in \phi_\sigma(U)$, and let $M\in U$ with $\phi_\sigma(M) = N$. Let $b\in \omega$ be the largest element such that $e^{M}_\sigma(b)$ is in the tuple $a$ in $M$, and let $B = \langle 0,\dots,b\rangle$ be the substructure of $N$ generated by the initial segment of $\omega$ up to $b$. Enumerate $B$ as $\{b_1,\dots,b_m\}$. 

Recall that $\theta_B(b_1,\dots,b_m)$ determines the  $\mathcal{L}_n$-isomorphism type of $B$. Let $\chi$ be the conjunction of all formulas of the form $P_{i_j}(b_{j'})$ and $\lnot P_{i_j}(b_{j'})$ which are satisfied in $N$, for $1\leq j\leq \ell$ and $1\leq j' \leq m$. And let $V = [\theta_B(b_1,\dots,b_m)\land \chi]$. Then $V$ is an open set in $\mathrm{Mod}_\omega(\widehat{\mathcal{B}^{*}_{n}})$ and $N\in V\subseteq \phi_\sigma(U)$. 

To see this, given $N'\in V$, we will find $M'\in U$ such that $\phi_\sigma(M') = N'$. We obtain $M'$ by modifying our original structure $M$ in stages. 
\begin{enumerate}
\item Since $N'\models \theta_B(b_1,\dots,b_m)$, $M\models \theta_B(e^M_\sigma(b_1),\dots,e^M_\sigma(b_m))$. So $e^M_\sigma$ restricts to an $\mathcal{L}_n$-embedding $B\to D^M_\sigma$. Since $D^M_\sigma\res \mathcal{L}_n$ and $N'\res\mathcal{L}_n$ are both isomorphic to the \Fraisse limit of $\mathcal{B}^{\text{fin}}_n$, this embedding extends to an $\mathcal{L}_n$-isomorphism $f\colon N'\to D^M_\sigma$ which agrees with $e^M_\sigma$ on $B$. We will use this $f$ to re-enumerate $M$ so that $\phi_\sigma$ produces a structure which agrees with the target structure $N'$ in the reduct to the language $\mathcal{L}_n$.

\item We view the composition $e^M_\sigma \circ f^{-1}\colon D^M_\sigma\to D^M_\sigma$ as a permutation of $D^M_\sigma\subseteq \omega$. Extending this permutation by the identity outside $D^M_\sigma$ defines a permutation $g\in S_\infty$. Let $M_0 = g(M)$ (by the logic action of $S_\infty$ on $\mathrm{Mod}_\omega(\widehat{\mathcal{D}^k_{n}})$. Note that $g$ is the identity on the tuple $a$. Indeed, if $a_i\notin D^M_\sigma$, then $g(a_i) = a_i$ by definition. And if $a_i\in D^M_\sigma$, then there is some $b'$ such that $e^M_\sigma(b') = a_i$. Since $a_i\leq a_t$, $b'\leq b$, so $b'\in B$. But since $f$ agrees with $e^M_\sigma$ on $B$, $g(a_i) = e^M_\sigma(f^{-1}(a_i)) = a_i$. It follows that $M_0\in [\psi(a)] = U$. Further, since $g$ fixes $D^M_\sigma$ set-wise, we have $D^{M_0}_\sigma = D^M_\sigma$ as subsets of $\omega$, and $e^{M_0}_\sigma = e^M_\sigma$. But $g^{-1}\circ e^{M_0}_\sigma = f$ is an $\mathcal{L}_n$-isomorphism $N'\to D^M_\sigma$, so $e^{M_0}_\sigma$ is an $\mathcal{L}_n$-isomorphism $N'\to D^{M_0}_\sigma$. It follows that $\phi_\sigma(M_0)\res \mathcal{L}_n = N'\res\mathcal{L}_n$. 

\item It remains to improve this to equality of $\widehat{\mathcal{L}_n}$-structures by relabeling $M_0$ by the predicates $P_i$. Let $M_1 = M_0\res\mathcal{L}^k_n$, dropping all the labels. We now define the $\widehat{\mathcal{L}_n}$-structure $M'$ expanding $M_0$. Define the labels on the elements of $D^{M_0}_\sigma$ so that $e^{M_0}_\sigma$ is an $\widehat{\mathcal{L}_n}$-isomorphism. Note that no two elements get exactly the same labels, since this is true in $N'$. Now assign labels arbitrarily to the remaining elements of $M_1$, only making sure that no two elements get exactly the same labels and that for each element $a_i$ of the tuple $a$, the new label of $a_i$ agrees with its old label on the predicates $P_{i_1},\dots,P_{i_\ell}$. In fact, this is already the case when $a_i$ is in $D^{M_0}_\sigma$, since $N'\in V$ and the information about the predicates $P_{i_1},\dots,P_{i_\ell}$ is included in $\chi$. We have now arranged that $\phi_\sigma(M') = N'$. 

\item It remains to check that $M'\in U = [\psi(a)]$. This follows from the fact that $M_0\in [\psi(a)]$, and $M'$ agrees with $M_0$ on all atomic formulas in $\mathcal{L}^k_n$, as well as the values of all the predicates mentioned in $\psi$ on the tuple $a$. \qedhere
\end{enumerate}
\end{proof}

We now have all the ingredients necessary to prove Theorem~\ref{thm:bklmain}.

\begin{proof}[Proof of Theorem~\ref{thm:bklmain}]
First we observe that having an irreducible $k$-cube in $\widehat{\mathcal{B}_{n}}$ with structures lying in some invariant subset $C$ of $\mathrm{Mod}_\omega(\widehat{\mathcal{B}_{n}})$ is the same as having an embedding of the combinatorial $k$-cube in the Becker graphs $\mathcal{B}(C/S_{\infty})$. This follows from Proposition \ref{Proposition Emb} and the fact that we are working with labeled structures; see the discussion after the definition of labeled $\mathcal{K}$-structures.
By Lemma~\ref{lem:bklprops}(2) it follows that the dimension of $\mathrm{Mod}_\omega(\widehat{\mathcal{B}_{n}})$ is at most $(n-1)$.

When $n=1$, it is immediate that $\mathrm{Mod}_\omega(\widehat{\mathcal{B}_{n}})$ is locally generically $(n-1)$ dimensional (this just means that every non-meager invariant set with the Baire property is nonempty). So in the remainder of the proof, we assume $n>1$. 

First, we show that $\mathrm{Mod}_\omega(\widehat{\mathcal{B}_{n}})$ is generically $(n-1)$-dimensional. Let $C\subseteq \mathrm{Mod}_\omega(\widehat{\mathcal{B}_{n}})$ be a comeager set. Since $\mathrm{Mod}_\omega(\widehat{\mathcal{B}^{*}_{n}})$ is dense $G_\delta$ in $\mathrm{Mod}_\omega(\widehat{\mathcal{B}_{n}})$, the restriction $C^* = C\cap \mathrm{Mod}_\omega(\widehat{\mathcal{B}^{*}_{n}})$ is comeager in $\mathrm{Mod}_\omega(\widehat{\mathcal{B}_{n}})$. For each $\sigma\in \Delta^{k-1}$, let $C^*_\sigma = \phi_\sigma^{-1}(C^*)\subseteq \mathrm{Mod}_\omega(\widehat{\mathcal{D}^k_{n}})$. Continuous open maps are category preserving, so each $C^*_\sigma$ is comeager in $\mathrm{Mod}_\omega(\widehat{\mathcal{D}^k_{n}})$. Since this space is nonempty and Polish, $\bigcap_{\sigma\in \Delta^{k-1}} C^*_\sigma$ is nonempty. Let $M$ be a structure in the intersection. 
We define a $k$-cube $\mathcal{A}$ by setting $A_\sigma = \phi_\sigma(M)$. In particular, $A_{[k]} = M\res \widehat{\mathcal{L}_n}$. To define the connecting maps, let $f^\sigma_{[k]} = e^M_\sigma \colon A_\sigma\to A_{[k]}$. Now for any $\sigma\subseteq \tau\subseteq [k]$, the map $f^\sigma_{[k]}$ factors through the map $f^\tau_{[k]}$ by a unique map $f^\sigma_\tau$. By construction, $\mathcal{A}$ is irreducible and $A_\sigma\in C^*\subseteq C$. 

Finally, we claim that $\mathrm{Mod}_\omega(\widehat{\mathcal{B}_{n}})$ is generically ergodic when $n>1$, so that the fact that it is generically $(n-1)$-dimensional immediately implies that it is locally generically $(n-1)$-dimensional. By~\cite[Proposition 6.1.9]{Gao}, it suffices to show that there is a structure in $\mathrm{Mod}_\omega(\widehat{\mathcal{B}_{n}})$ with a dense orbit.

Consider the $\widehat{\mathcal{L}_n}$-theory which axiomatizes labeled BKL$_n$-structures (axioms (A1), (B1), (B2), (B3), and (B4)) and contains the additional axiom:  
\[\bigwedge_{A=\{a_0,\ldots,a_k\}} \bigwedge_{(T_0,F_0),\ldots,(T_k, F_k)} \exists x_0,\ldots,x_k  \bigg(\theta_A(x_0,\ldots,x_k) \wedge \bigwedge_{i\leq k} \big(\bigwedge_{j\in T_i} P_j(x_i) \land \bigwedge_{j\in F_i} \lnot P_j(x_i)\big)\bigg),\]
where $A = \{a_0,\dots,a_k\}$ varies over all isomorphism types of structures in $\mathcal{B}^{\mathrm{fin}}_{n}$, and for each $i$, $(T_i,F_i)$ is a pair of disjoint finite subsets of $\omega$.  It is easy to  see that there are structures in $\mathrm{Mod}_\omega(\widehat{\mathcal{B}_{n}})$ which satisfy this theory and that every such structure has a dense orbit under the $S_{\infty}$ action.
\end{proof}

\begin{remark}\label{Remark fin}
Keeping track of the properties of $\mathcal{B}_n$ used in the proof of Theorem~\ref{thm:bklmain}, we see that for any class of structures $\mathcal{K}$ satisfying the following hypotheses, the $S_{\infty}$-space $\mathrm{Mod}_\omega(\widehat{\mathcal{K}})$ is locally generically $(\geq (n-1))$-dimensional: 
\begin{enumerate}
\item $\mathcal{K}$ is closed under substructure. 
\item $\mathrm{Mod}_\omega(\mathcal{K})$ is a $G_\delta$-subspace of $\mathcal{X}_{\mathcal{L}}$. 
\item $\mathcal{K}^{\mathrm{fin}}$ is a \Fraisse class, whose \Fraisse limit is in $\mathcal{K}$. 
\item $\mathcal{K}^{\mathrm{fin}}$ has disjoint $k$-amalgamation for all $1\leq k\leq n$. 
\item $\mathcal{K}^{\mathrm{fin}}$ is \emph{separable}: this is the condition in Lemma~\ref{lem:bklspaces}(1). 
\end{enumerate}
It follows, for example, that if $\mathcal{K}$ is the class $\mathcal{S}$ of all sets or $\mathcal{K}$ is the class  of all graphs, then $\mathrm{Mod}_\omega(\widehat{\mathcal{K}})$ is locally generically $\infty$-dimensional. In both cases Theorem \ref{T: main 2} implies that $\simeq_{\mathrm{iso}}$ on $\mathrm{Mod}_\omega(\widehat{\mathcal{K}})$ is incomparable to $\simeq_{\mathrm{iso}}$ on $\mathrm{Mod}_\omega(\widehat{\mathcal{B}_{n}})$, for all $n>0$. In particular, $=^{+}$ is incomparable to  $(\mathrm{Mod}_\omega(\widehat{\mathcal{B}_{n}}),\simeq_{\mathrm{iso}})$, for all $n>0$; see Example \ref{Example:ctbl sets of reals}.
\end{remark}

\bibliography{bibliography}
\bibliographystyle{alpha}

\end{document}